\providecommand\@dotsep{5}
\def\listtodoname{List of Todos}
\def\listoftodos{\@starttoc{tdo}\listtodoname}
\numberwithin{equation}{section}
\newtheorem{theorem}{Theorem}[section]
\newtheorem{proposition}[theorem]{Proposition}
\newtheorem{lemma}[theorem]{Lemma}
\newtheorem{remark}{Remark}
\newcommand\R{\mathbb R}
\begin{document}

\title[fractional Hamiltonian systems with positive semi-definite matrix]
{Existence and concentration of solution for a fractional Hamiltonian systems with positive semi-definite matrix}

\author{C\'esar Torres}
\author{Ziheng Zhang}
\author{Amado Mendez}

\address[C\'esar Torres]{\newline\indent Departamento de Matem\'aticas
\newline\indent 
Universidad Nacional de Trujillo,
\newline\indent
Av. Juan Pablo II s/n. Trujillo-Per\'u}
\email{\href{mailto:ctl\_576@yahoo.es}{ctl\_576@yahoo.es}}

\address[Ziheng Zhang]
{\newline\indent Department of Mathematics, 
\newline\indent 
Tianjin Polytechnic University, 
\newline\indent
Tianjin 300387, China.}
\email{\href{mailto:zhzh@mail.bnu.edu.cn}{zhzh@mail.bnu.edu.cn}}

\address[Amado Mendez]{\newline\indent Departamento de Matem\'aticas
\newline\indent 
Universidad Nacional de Trujillo,
\newline\indent
Av. Juan Pablo II s/n. Trujillo-Per\'u}
\email{\href{mailto:gamc55@hotmail.com}{gamc55@hotmail.com}}

%\thanks{Giovany M. Figueiredo was partially
%supported by  CNPq, Brazil. Marcos T.O. Pimenta was supported by Fapesp and CNPq, Brazil. Gaetano Siciliano  was partially supported by
%Fapesp and CNPq, Brazil. }
%\subjclass[2000]{35A15, 35S05, 58E05, 74G35}
%\date{\today}
%\keywords{Fractional Laplacian, multiplicity of solutions, Ljusternick-Schnirelmann category, Morse theory}

%\author{
%{\bf\large Giovany M. Figueiredo}\hspace{2mm}
%{\bf\large}\vspace{1mm}\\
%{\it\small Universidade Federal do Par\'a, Faculdade de Matem\'atica}\\
%{\it\small 66075-110, Bel\'em - PA, Brazil }\\
%{\it\small e-mail: giovany@ufpa.br}
%\\ \\
%{\bf\large Gaetano Siciliano}\hspace{2mm}
%{\bf\large}\vspace{1mm}\\
%{\it\small Universidade de S\~ao Paulo - USP, Departamento de  Matem\'atica e Estat\'istica }\\
%{\it\small 05508-090, S\~ao Paulo - SP, Brazil }\\
%{\it\small e-mail: sicilian@ime.usp.br} }
%
%\title{On a  multiplicity result via Morse theory for a problem with fractional Laplace in $\mathbb R^{N}$
%\thanks{Giovany M. Figueiredo was partially
%supported by  CNPq/Brazil . Gaetano Siciliano  was partially supported by
%Fapesp and CNPq, Brazil. }}
%\date{}

\pretolerance10000

%\begin{document}

\begin{abstract}
\noindent We study the existence of solutions for the following fractional Hamiltonian systems
$$
\left\{
  \begin{array}{ll}
   - _tD^{\alpha}_{\infty}(_{-\infty}D^{\alpha}_{t}u(t))-\lambda L(t)u(t)+\nabla W(t,u(t))=0,\\[0.1cm]
    u\in H^{\alpha}(\mathbb{R},\mathbb{R}^n),
  \end{array}
\right.
  \eqno(\mbox{FHS})_\lambda
$$
where $\alpha\in (1/2,1)$, $t\in \mathbb{R}$, $u\in \mathbb{R}^n$, $\lambda>0$ is a parameter, $L\in C(\mathbb{R},\mathbb{R}^{n^2})$ is a symmetric matrix
for all $t\in \mathbb{R}$, $W\in C^1(\mathbb{R} \times \mathbb{R}^n,\mathbb{R})$. Assuming that
$L(t)$ is a positive semi-definite symmetric matrix for all $t\in \mathbb{R}$, that is, $L(t)\equiv 0$ is allowed to occur in some finite interval $T$ of $\mathbb{R}$,
$W(t,u)$ satisfies some superquadratic conditions weaker than Ambrosetti-Rabinowitz condition, we show that (FHS)$_\lambda$ has a solution which vanishes on
$\mathbb{R}\setminus T$ as $\lambda \to \infty$, and converges to some $\tilde{u}\in H^{\alpha}(\R, \R^n)$. Here, $\tilde{u}\in E_{0}^{\alpha}$ is a solution
of the Dirichlet BVP for fractional systems on the finite interval $T$. Our results are new and improve recent results in the literature even in the case $\alpha =1$.
\end{abstract}

%\thanks{Claudianor Alves was partially supported by CNPq/Brazil Proc. 304036/2013-7 ; Giovany M. Figueiredo was partially
%supported by  CNPq, Brazil; Gaetano Siciliano  was partially supported by
%Fapesp and CNPq, Brazil. }
\subjclass[2010]{Primary 34C37; Secondary 35A15, 35B38.} 
\keywords{Fractional Hamiltonian systems, Fractional Sobolev space, Critical point theory, Concentration phenomena.}

\maketitle

%------------------------------------------------------------------------------
\section{Introduction}
%------------------------------------------------------------------------------

Fractional Hamiltonian systems are a significant area of nonlinear analysis, since they appear in many phenomena studied in several fields of applied science,
such as engineering, physics, chemistry, astronomy and control theory. On the other hand, the theory of fractional calculus is a part that intensively developing
during the last decades; see  \cite{ATMS04, ErvinR06, Hilfer00, KST06, MR93, Pod99} and the references therein. The existence of homoclinic solutions for Hamiltonian systems and their importance in the study of behavior of dynamical
systems can be recognized from Poincar\'{e} \cite{Poincare}. Since then, the investigation of the existence and multiplicity of homoclinic solutions became one of the main 
important problems of research in dynamical systems. Critical point theorem was first used by Rabinowitz \cite{Rab86} to obtain the existence of periodic solutions for first
order Hamiltonian systems, while the first multiplicity result is due to Ambrosetti and Zelati \cite{AmbroZelati93}. Therefore, a large number of mathematicians used critical point
theory and variational methods to prove the existence of homoclinic solutions for Hamiltonian systems; see for instance  \cite{Co91,GWC,Ding95,Izydorek05,Izydorek07,
Omana92,Rab90,Rab91} and the reference therein. 

The critical point theory has become an effective tool in investigating the existence and multiplicity of solutions for fractional differential equations by 
constructing fractional variational structures. Especially, in \cite{FJYZ} the authors firstly dealt with a class of fractional boundary value problem via critical 
point theory. From then on, Variational methods and critical point theory are shown to be effective in determining the solutions for fractional differential 
equations with variational structure. We also mention the work by Torres \cite{Torres12}, where the author considered the following fractional Hamiltonian 
systems
$$
\left\{
  \begin{array}{ll}
    _tD^{\alpha}_{\infty}(_{-\infty}D^{\alpha}_{t}u(t))+L(t)u(t)=\nabla W(t,u(t)),\\[0.1cm]
    u\in H^{\alpha}(\mathbb{R},\mathbb{R}^n),
  \end{array}
\right.
  \eqno(\mbox{FHS})
$$
where $\alpha\in (1/2,1)$, $t\in \mathbb{R}$, $u\in \mathbb{R}^n$, $L\in C(\mathbb{R},\mathbb{R}^{n^2})$ is a symmetric and positive definite matrix
for all $t\in \mathbb{R}$, $W\in C^1(\mathbb{R}\times \mathbb{R}^n,\mathbb{R})$ and $\nabla W(t,u)$ is the gradient of $W(t,u)$ at $u$. Assuming that $L(t)$ satisfied the following coercivity condition
\begin{itemize}
\item[(L)] there exists an $l\in C(\mathbb{R},(0,\infty))$ with $l(t)\rightarrow \infty$ as $|t|\rightarrow \infty$ such that
\begin{equation}\label{eqn:L coercive}
(L(t)u,u)\geq l(t)|u|^2 \quad \mbox{for all}\,\, t\in \mathbb{R} \,\, \mbox{and} \,\, u\in \mathbb{R}^n.
\end{equation}
\end{itemize}
and that $W(t,u)$ satisfies the Ambrosetti-Rabinowitz condition
\begin{itemize}
\item[(FHS$_1$)]$W\in C^1(\mathbb{R} \times \mathbb{R}^n,\mathbb{R})$ and there is a constant $\theta>2$ such that
$$
0<\theta W(t,u)\leq (\nabla W(t,u),u)\quad \mbox{for all}\,\, t\in \mathbb{R} \,\,\mbox{and}\,\, u\in \mathbb{R}^n\backslash\{0\},
$$
\end{itemize}
and other suitable conditions, the author showed that (FHS) possesses at least one nontrivial solution via Mountain
Pass Theorem. Note that (FHS)$_1$, implies that $W(t,u)$ is of superquadratic growth as $|u|\rightarrow \infty$.
Since then, many researchers dealt with (FHS) for the cases that $W(t,u)$ is superquadratic or subquadratic at infinity; see for instance \cite{MendezTorres15,XuReganZhang15,ZhangYuan}. In addition, some perturbed fractional Hamiltonian systems are discussed in \cite{Torres14,XuReganZhang15}.

In \cite{ZhangYuan14} the authors focused on weakening the coercivity condition $(L)$, more precisely they assumed that $L(t)$ is bounded in the following sense:
\begin{itemize}
\item[(L)$'$] $L\in C(\mathbb{R},\mathbb{R}^{n^2})$ is a symmetric and positive definite matrix for all $t\in \mathbb{R}$ and there are constants $0<\tau_1<\tau_2<\infty$ such that
$$
\tau_1|u|^2\leq (L(t)u,u)\leq \tau_2|u|^2\quad \mbox{for all}\,\, (t,u)\in \mathbb{R} \times \mathbb{R}^n,
$$
\end{itemize}
By supposed that $W(t,u)$ is subquadratic as $|u|\rightarrow +\infty$, the authors also showed that (FHS) possessed infinitely many solutions, which has been generalized in
\cite{NyaZhou2017,ZhouZhang2017}.

In the present paper we deal with the following fractional Hamiltonian systems
$$
\left\{
  \begin{array}{ll}
   - _tD^{\alpha}_{\infty}(_{-\infty}D^{\alpha}_{t}u(t))-\lambda L(t)u(t)+\nabla W(t,u(t))=0,\\[0.1cm]
    u\in H^{\alpha}(\mathbb{R},\mathbb{R}^n),
  \end{array}
\right.
  \eqno(\mbox{FHS})_{\lambda}
$$
where $\alpha\in (1/2,1)$, $t\in \mathbb{R}$, $u\in \mathbb{R}^n$, $\lambda>0$ is a parameter, $W\in C^1(\mathbb{R} \times \mathbb{R}^n,\mathbb{R})$ and $L$ satisfies the following conditions
\begin{itemize}
\item[$(\mathcal{L})_1$]$L\in C(\mathbb{R},\mathbb{R}^{n\times n})$ is a symmetric matrix for all $t\in\mathbb{R}$; there exists a nonnegative continuous function $l:\mathbb{R} \rightarrow \mathbb{R}$ and
a constant $c>0$ such that
$$
(L(t)u,u)\geq l(t)|u|^2,
$$
and the set $\{l<c\}:=\{t\in \mathbb{R} \,|\,l(t)<c\}$ is nonempty with $meas \{l<c\}<\frac{1}{C_{\infty}^2}$, where $meas \{\cdot\}$ is the Lebesgue measure and $C_\infty$
is the best Sobolev constant for the embedding of $X^{\alpha}$ into $L^{\infty}(\mathbb{R})$;
\item[$(\mathcal{L})_2$]$J=int (l^{-1}(0))$ is a nonempty finite interval and $\overline{J}=l^{-1}(0)$;
\item[$(\mathcal{L})_3$]there exists an open interval $T\subset J$ such that $L(t)\equiv 0$ for all $t\in \overline{T}$.
\end{itemize}
In particular, if $\alpha=1$ in (FHS)$_\lambda$, then it reduces to the following well-known
second order Hamiltonian systems
$$
\ddot u- \lambda L(t) u+\nabla W(t,u)=0.\eqno(\mbox{HS})
$$
Recently a second order Hamiltonian systems like (HS) with positive semi-definite matrix was considered in \cite{JSTW}. Assuming that
$W \in C^1(\mathbb{R}\times \mathbb{R}^n, \mathbb{R})$ is an indefinite potential satisfying asymptotically quadratic condition at infinity on $u$, Sun and Wu,
with a little mistake in their embedding results, have proved the existence of two homoclinic solutions of (FHS$_\lambda$). For more related works, we refer the reader to
\cite{Co91,Ding95,Izydorek05,Izydorek07,Omana92,Rab91} and the references mentioned there.

Here we must point out, to obtain the existence or multiplicity of solutions for Hamiltonian systems, all the papers mentioned above need the assumption that
the symmetric matrix $L(t)$ is positive definite, see (L) and (L)$'$. Therefore, recently the authors in \cite{Benhassine2017,TorresZhang2017,ZhangTorres}
considered the case that  $L(t)$ is positive semi-definite satisfying $(\mathcal{L})_1$. In \cite{Benhassine2017}, the author dealt with (FHS) for the case that
$(\mathcal{L})_1$ is satisfied and $W(t,u)$ involves a combination of superquadratic and subquadratic terms and is allowed to be sign-changing. In  \cite{TorresZhang2017,ZhangTorres},
we have considered the existence of solutions of (FHS)$_\lambda$ and the concentration of its solutions when $(\mathcal{L})_1$-$(\mathcal{L})_3$ are satisfied and
$W(t,u)$ meets with some classes of superquadratic hypothesis.

Motivated by these previous results, the main purpose of this paper is
to investigate (FHS)$_\lambda$ without Ambrosetti-Rabinowitz condition (FHS$_1$). More precisely, we suppose that  $W(t,u)$ satisfy the following assumptions
\begin{enumerate}
\item[($W_1$)] $|\nabla W(t,u)| = o(|u|)$ as $|u|\to 0$ uniformly in $t\in \mathbb{R}$.
\item[$(W_2)$] $W(t,u)\geq 0$ for all $(t,u)\in \mathbb{R}\times \mathbb{R}^N$ and $H(t,u)\geq0$  for all $(t, u) \in \mathbb{R} \times \mathbb{R}^N$, where
$$
H(t,u) :=\frac{1}{2}\langle \nabla W(t,u), u \rangle - W(t,u).
$$
\item[$(W_3)$] $\frac{W(t,u)}{|u|^2} \to + \infty$ as $|u| \to +\infty$ uniformly in $t\in \mathbb{R}$.
\item[$(W_4)$] There exist $C_0, R >0$, and $\sigma >1$ such that
$$
\frac{|\nabla W(t,u)|^\sigma}{|u|^\sigma} \leq C_0H(t,u)\;\;\mbox{if}\;\;|u| \geq R.
$$
\end{enumerate}
Note that, according to \cite{GWC} the nonlinearity
$$
W(t,u) = g(t)(|u|^p + (p-2)|u|^{p-\epsilon}\sin^2(\frac{|u|^\epsilon}{\epsilon})),
$$
where $g(t)>0$ is $T$-periodic in $t$, $0<\epsilon < p-2$ and $p>2$, satisfies $(W_1)-(W_4)$, but (FHS$_1$) is not satisfied.

Now we are in the position to state our main result.
\begin{theorem}\label{Thm:MainTheorem1}
Suppose that {\rm ($\mathcal{L}$)$_1$}-{\rm ($\mathcal{L}$)$_3$}, $(W_1) - (W_4)$ are satisfied, then there exists $\Lambda _*>0$ such that for every
$\lambda>\Lambda_*$, {\rm(FHS)$_\lambda$} has at least one nontrivial solution.
\end{theorem}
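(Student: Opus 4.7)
The plan is to find a nontrivial critical point of the energy functional
$$
I_\lambda(u)=\frac{1}{2}\int_{\R}\bigl|{}_{-\infty}D^{\alpha}_{t}u\bigr|^2\,dt+\frac{\lambda}{2}\int_{\R}(L(t)u,u)\,dt-\int_{\R}W(t,u)\,dt
$$
by a Mountain Pass argument with the Cerami variant, on the Hilbert space $E_\lambda$ obtained as the completion of $C_0^\infty(\R,\R^n)$ in the norm $\|u\|_\lambda^2=\int|{}_{-\infty}D^{\alpha}_{t}u|^2+\lambda\int(L(t)u,u)$. The preliminary step I will carry out is a $\lambda$-uniform embedding estimate: splitting $\|u\|_{L^2}^2$ over $\{l\ge c\}$ and $\{l<c\}$, bounding the first piece by $(\lambda c)^{-1}\|u\|_\lambda^2$ (using $\lambda(L(t)u,u)\ge\lambda l(t)|u|^2$) and the second by $C_\infty^2\,\mathrm{meas}\{l<c\}\,\|u\|_{X^\alpha}^2$, whose coefficient is strictly less than $1$ by $(\mathcal{L})_1$, and absorbing, I get that for every $\lambda$ larger than some $\Lambda_0>0$ the norm $\|u\|_\lambda$ controls $\|u\|_{H^\alpha}$ and therefore $\|u\|_{L^q}$ for every $q\in[2,\infty]$, with embedding constants independent of $\lambda$.

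Next I verify the Mountain Pass geometry. From $(W_1)$ I obtain $|W(t,u)|\le\varepsilon|u|^2+C_\varepsilon|u|^p$ with some $p>2$ uniformly in $t$, and combining with the uniform embeddings yields $I_\lambda(u)\ge\rho>0$ on a sphere $\|u\|_\lambda=r$ of $\lambda$-independent radius. For the mountain peak I pick a nontrivial $\varphi\in C_0^\infty(T,\R^n)$; by $(\mathcal{L})_3$ we have $L(t)\equiv0$ on $\overline{T}$, so $\|\varphi\|_\lambda=\|\varphi\|_{X^\alpha}$ is independent of $\lambda$, while $(W_2)$–$(W_3)$ and Fatou give $\int W(t,s\varphi)/s^2\to+\infty$, so $I_\lambda(s\varphi)\to-\infty$ as $s\to\infty$. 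This produces a well-defined minimax level
$$
c_\lambda=\inf_{\gamma\in\Gamma}\max_{s\in[0,1]}I_\lambda(\gamma(s))\ge\rho>0.
$$

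The decisive step, which replaces the role of (AR), is the boundedness of Cerami sequences at level $c_\lambda$. For $(u_n)\subset E_\lambda$ with $I_\lambda(u_n)\to c_\lambda$ and $(1+\|u_n\|_\lambda)\|I_\lambda'(u_n)\|_{E_\lambda^*}\to0$, the identity
$$
I_\lambda(u_n)-\tfrac12\langle I_\lambda'(u_n),u_n\rangle=\int_{\R}H(t,u_n)\,dt
$$
and $(W_2)$ force $\int H(t,u_n)\le C$. Arguing by contradiction I suppose $\|u_n\|_\lambda\to\infty$ and set $v_n=u_n/\|u_n\|_\lambda\rightharpoonup v$ in $E_\lambda$. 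If $v\not\equiv0$, then on $\{v\ne0\}$ one has $|u_n|\to\infty$, and $(W_3)$ via Fatou yields $\int W(t,u_n)/\|u_n\|_\lambda^2\to+\infty$, contradicting $I_\lambda(u_n)/\|u_n\|_\lambda^2\to0$. If $v\equiv0$, I apply the Ding–Jeanjean maximization trick: choose $s_n\in[0,1]$ with $I_\lambda(s_nu_n)=\max_{s\in[0,1]}I_\lambda(su_n)$; for each fixed $M>0$ and $n$ large, $Mv_n$ is admissible and $I_\lambda(s_nu_n)\ge I_\lambda(Mv_n)=\tfrac{M^2}{2}-\int W(t,Mv_n)$. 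Using $v_n\to0$ in $L^q_{\mathrm{loc}}$ for $q\ge2$ from the compact part of the $H^\alpha$ embedding, combined with $(W_4)$ at conjugate exponents $\sigma,\sigma'$ on $\{|u|\ge R\}$ and the a priori bound on $\int H$, I control the tail of $\int W(t,Mv_n)$ uniformly in $n$, so this integral tends to $0$ and $I_\lambda(s_nu_n)\to+\infty$. Then $\langle I_\lambda'(s_nu_n),s_nu_n\rangle=0$ gives $\int H(t,s_nu_n)=I_\lambda(s_nu_n)\to+\infty$, contradicting the $\int H$ bound propagated from $u_n$ to $s_nu_n$ through the same $(W_4)$-Hölder argument. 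This establishes $\|u_n\|_\lambda\le C$.

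With $(u_n)$ bounded I extract $u_n\rightharpoonup u$, and combining local $H^\alpha$-compactness with the coercivity of $\lambda L(t)|u|^2$ outside the zero set of $l$ (via $(\mathcal{L})_1$) yields strong convergence in $L^q(\R)$ for $q\in(2,\infty)$; passing to the limit in $\langle I_\lambda'(u_n),\psi\rangle\to0$ then gives $I_\lambda'(u)=0$, and $I_\lambda(u)=c_\lambda>0$ forces $u\ne0$. The main obstacle will be the case $v\equiv0$ of the Cerami step, since the lack of global compactness on $\R$ forces the $(W_4)$-based propagation of the $\int H$ bound from $u_n$ to the rescaled sequence $s_nu_n$; the $L^\infty$-embedding of $X^\alpha$ available for $\alpha>1/2$ is what will make this propagation work uniformly and close the argument.
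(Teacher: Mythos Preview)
Your overall strategy---Mountain Pass with the Cerami condition on the weighted space $E_\lambda$---matches the paper's, and your treatment of the $\lambda$-uniform embeddings and the mountain pass geometry is essentially the same. The gap is in the boundedness of Cerami sequences, specifically the case $v\equiv0$ of your dichotomy. The Jeanjean maximization trick you invoke needs two inputs that $(W_1)$--$(W_4)$ do not supply. First, you claim $\int_{\R}W(t,Mv_n)\,dt\to0$ for each fixed $M$, but weak convergence $v_n\rightharpoonup0$ gives only $v_n\to0$ in $L^q_{\mathrm{loc}}$; on $\{l\ge c\}$ one has merely $\int|v_n|^2\le(\lambda c)^{-1}$, so $\limsup_n\int W(t,Mv_n)$ is of order $M^p/\lambda$ and does not vanish as $n\to\infty$. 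Consequently $\liminf_n I_\lambda(Mv_n)\ge M^2/2-CM^p/\lambda$, which is bounded above in $M$ for each fixed $\lambda$, and you cannot conclude $I_\lambda(s_nu_n)\to+\infty$. Second, even granting that conclusion, the ``propagation'' of the bound on $\int H(t,u_n)$ to $\int H(t,s_nu_n)$ has no basis: $(W_4)$ provides a \emph{lower} bound $H(t,u)\ge C_0^{-1}|\nabla W(t,u)|^\sigma/|u|^\sigma$, not an upper one, and no monotonicity of $s\mapsto H(t,su)$ is assumed. The a~priori bound on $\int H(t,u_n)$ simply does not transfer to the rescaled sequence.

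The paper avoids the $v=0$/$v\neq0$ dichotomy altogether and proves boundedness by a level-set decomposition (in the spirit of Ding and Chen). From $\int_{\R}\langle\nabla W(t,u_n),u_n\rangle/\|u_n\|_\lambda^2\,dt\to1$ it splits $\R$ into $\Omega_n(0,\delta)$, $\Omega_n(\delta,R)$, $\Omega_n(R,\infty)$ according to the size of $|u_n|$. The small-$|u_n|$ piece is $\le\varepsilon$ by $(W_1)$; the intermediate piece uses $H(t,u_n)\ge C_\delta^R|u_n|^2$ together with $\int H(t,u_n)\le c+o(1)$ to get $\int_{\Omega_n(\delta,R)}|v_n|^2\to0$; the large-$|u_n|$ piece uses $(W_4)$ with H\"older at exponents $(\sigma,\sigma')$ and the measure estimate $\mathrm{meas}\,\Omega_n(R,\infty)\le(c+o(1))/h(R)$, where $h(r):=\inf_{|u|\ge r}H(t,u)\to\infty$ by $(W_3)$--$(W_4)$. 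This contradicts the limit being $1$, and it never invokes the weak limit of $v_n$. The threshold $\Lambda_*$ then enters only in the subsequent compactness step (strong convergence of the bounded Cerami sequence), where the paper uses a Br\'ezis--Lieb type splitting $I_\lambda(u_n-u)=I_\lambda(u_n)-I_\lambda(u)+o(1)$ combined with $\|w_n\|_{L^2}^2\le(\lambda c)^{-1}\|w_n\|_\lambda^2+o(1)$ to force $w_n:=u_n-u\to0$ for $\lambda$ large.
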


On the concentration of solutions obtained above, for technical reason, we consider that there exists $0<\varrho< +\infty$, such that $T = [-\varrho,\varrho]$,
where $T$ is given by $(\mathcal{L})_3$. We have the following result.
\begin{theorem}\label{Thm:MainTheorem2}
Let $u_\lambda$ be  a solution of problem ${\rm (FHS)}_\lambda$ obtained in Theorem \ref{Thm:MainTheorem1}, then $u_\lambda \to \tilde{u}$ strongly in
$H^{\alpha}(\mathbb{R})$ as $\lambda \to \infty$, where $\tilde{u}$ is a nontrivial solution of the following boundary value problem
\begin{eqnarray}\label{eqn:BVP}
\left\{
  \begin{array}{ll}
    {_{t}}D_{\varrho}^{\alpha} ({_{-\varrho}}D_{t}^{\alpha})u  = \nabla W(t, u), & t\in (-\varrho, \varrho) \\[0.2cm]
    u(-\varrho) = u(\varrho) = 0,
  \end{array}
\right.
\end{eqnarray}
where ${_{-\varrho}}D_{t}^{\alpha}$ and $_{t}D_{\varrho}^{\alpha}$ are left and right Riemann-Liouville fractional derivatives of order $\alpha$ on $[-\varrho,\varrho]$ respectively.
\end{theorem}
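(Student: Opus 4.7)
The plan follows the standard concentration--compactness scheme for fractional problems with vanishing potentials, adapted to the superquadratic setting without Ambrosetti--Rabinowitz. The main steps are: (i) uniform bounds on $u_\lambda$; (ii) extraction of a weak limit $\tilde u$ supported in $\overline T$; (iii) identification of $\tilde u$ as a nontrivial weak solution of \eqref{eqn:BVP}; and (iv) upgrade of weak to strong convergence in $H^\alpha(\mathbb{R},\mathbb{R}^n)$.

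To bound the mountain-pass levels $c_\lambda$ independently of $\lambda$, I would fix a nontrivial $\varphi\in C_0^\infty(T,\mathbb{R}^n)$ and note that along the path $s\mapsto s\varphi$ the $\lambda L(t)$-term is identically zero because $L\equiv 0$ on $\overline T$ by $(\mathcal{L})_3$; this yields $c_\lambda\le M$ for some $M$ independent of $\lambda$. The mountain-pass geometry established in Theorem~\ref{Thm:MainTheorem1} also gives a uniform lower bound $c_\lambda\ge\rho_0>0$. Combining $I_\lambda(u_\lambda)=c_\lambda\le M$ with $\langle I'_\lambda(u_\lambda),u_\lambda\rangle=0$ and the identity $\int_{\mathbb{R}} H(t,u_\lambda)\,dt=c_\lambda$, the boundedness argument of Theorem~\ref{Thm:MainTheorem1} (which uses $(W_2)$--$(W_4)$ in place of Ambrosetti--Rabinowitz) then yields $\|u_\lambda\|_\lambda\le C$, and hence $\|u_\lambda\|_{H^\alpha}\le C$, uniformly in $\lambda$.

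Passing to a subsequence, $u_\lambda\rightharpoonup\tilde u$ in $H^\alpha(\mathbb{R},\mathbb{R}^n)$ with a.e.\ and $L^p_{\mathrm{loc}}$ convergence. Using the bound
$$\lambda\int_{\mathbb{R}} l(t)|u_\lambda|^2\,dt\le\lambda\int_{\mathbb{R}}(L(t)u_\lambda,u_\lambda)\,dt\le\|u_\lambda\|_\lambda^2\le C,$$
Fatou on the truncations $\{l\ge 1/k\}$ forces $\tilde u\equiv 0$ on $\mathbb{R}\setminus\overline J$. Testing the equation against $\varphi\in C_0^\infty(\mathbb{R}\setminus\overline T,\mathbb{R}^n)$, dividing by $\lambda$, and letting $\lambda\to\infty$ further gives $L(t)\tilde u(t)=0$ a.e.\ on $\mathbb{R}\setminus\overline T$, which together with the Hölder continuity of $\tilde u$ (since $\alpha>1/2$ implies $H^\alpha\hookrightarrow C^{0,\alpha-1/2}$) and the geometry $\overline T=[-\varrho,\varrho]$ yields $\tilde u\equiv 0$ on $\mathbb{R}\setminus T$, i.e.\ $\tilde u\in E_0^\alpha$. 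For $\varphi\in C_0^\infty(T,\mathbb{R}^n)$ the $\lambda L(t)u_\lambda$-term in $\langle I'_\lambda(u_\lambda),\varphi\rangle=0$ vanishes identically, and passing to the limit (handling $\int(\nabla W(t,u_\lambda),\varphi)\,dt$ via $(W_1),(W_4)$ and local strong convergence) produces
$$\int_T({_{-\varrho}}D_t^\alpha\tilde u,\,{_{-\varrho}}D_t^\alpha\varphi)\,dt=\int_T(\nabla W(t,\tilde u),\varphi)\,dt,$$
i.e.\ the weak form of \eqref{eqn:BVP}. Nontriviality follows from $\int_{\mathbb{R}} H(t,u_\lambda)\,dt=c_\lambda\ge\rho_0>0$, the nonnegativity of $H$ from $(W_2)$, and Fatou.

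To upgrade to strong convergence I would decompose $\|u_\lambda-\tilde u\|_{H^\alpha}^2$ in the usual Hilbert-space way and show $\|u_\lambda\|_{H^\alpha}^2\to\|\tilde u\|_{H^\alpha}^2$ by testing $I'_\lambda(u_\lambda)=0$ against both $u_\lambda$ and $\tilde u$, using that $\lambda\int_{\mathbb{R}}(L(t)u_\lambda,u_\lambda)\,dt\to 0$ (a consequence of the uniform bound on $c_\lambda$) and applying a Brezis--Lieb-type splitting to the nonlinear term, enabled by $(W_4)$. \emph{The principal difficulty} is precisely this final step together with the fine localization of $\tilde u$: without Ambrosetti--Rabinowitz one cannot directly bound $W$ by $\langle\nabla W,u\rangle$, so the control of $\int H(t,u_\lambda)\,dt$ via $(W_4)$ and the exclusion of concentration of $\|u_\lambda\|_\lambda^2$ near $\partial T$ (where $L$ begins to be positive) must be handled delicately, most plausibly via truncation together with the compactness tools developed in \cite{TorresZhang2017,ZhangTorres}.
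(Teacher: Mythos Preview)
Your overall scheme matches the paper's proof: a $\lambda$-independent upper bound on $c_\lambda$ via a test function supported in $T$ (where the $L$-term vanishes), boundedness of $\{u_\lambda\}$, weak limit $\tilde u$, Fatou to force $\tilde u=0$ off $\overline J$, testing against $\varphi\in C_0^\infty(T,\mathbb{R}^n)$ to identify $\tilde u$ as a weak solution of \eqref{eqn:BVP}, and finally norm convergence to upgrade to strong convergence.

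The one place where you overcomplicate things is the strong-convergence step. The paper does \emph{not} use a Brezis--Lieb splitting, nor does it need $\lambda\int_{\mathbb{R}}(L(t)u_\lambda,u_\lambda)\,dt\to 0$ as an \emph{input} (and note that this vanishing is not a direct consequence of the uniform bound on $c_\lambda$, which only gives boundedness of that integral). Instead, the paper simply tests $I'_{\lambda_k}(u_k)=0$ against both $u_k$ and $\tilde u$ to obtain
\[
\|u_k\|_{X^{\alpha,\lambda_k}}^2=\int_{\mathbb{R}}(\nabla W(t,u_k),u_k)\,dt,\qquad
\langle u_k,\tilde u\rangle_{X^{\alpha,\lambda_k}}=\int_{\mathbb{R}}(\nabla W(t,u_k),\tilde u)\,dt,
\]
observes that $\langle u_k,\tilde u\rangle_{X^{\alpha,\lambda_k}}=\langle u_k,\tilde u\rangle_{X^{\alpha}}$ (since $L\tilde u\equiv 0$), and passes to the limit in the right-hand sides to conclude $\|u_k\|_{X^{\alpha,\lambda_k}}^2\to\|\tilde u\|_{X^\alpha}^2$; weak lower semicontinuity then forces $\|u_k\|_{X^\alpha}\to\|\tilde u\|_{X^\alpha}$ and hence strong convergence. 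The vanishing of the $\lambda$-weighted $L$-term is thus an \emph{output} of this computation, not a hypothesis. Your extra care about localizing $\tilde u$ precisely to $\overline T$ (rather than just $\overline J$) and about nontriviality via Fatou on $\int H(t,u_\lambda)\,dt$ are points the paper treats more loosely, so there your proposal is if anything more complete.
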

\begin{remark}
{\rm
In Theorem \ref{Thm:MainTheorem1}, we give some new superquadratic conditions on $W(t,u)$ to guarantee the existence of solutions and investigate
the concentration of these solutions in \ref{Thm:MainTheorem2}. However, we must point out that the methods in \cite{Benhassine2017,TorresZhang2017,ZhangTorres}
are not be valid for our new assumptions. To overcome this difficulty we apply the Mountain Pass Theorem with Cerami condition, however, the direct application of the mountain pass theorem is not enough since the Cerami sequences might lose compactness in the whole space $\mathbb{R}$. Then it is necessary to introduce a new compactness result to recover the convergence of Cerami sequence, for more details see Lemma \ref{cerami1}.
}
\end{remark}
The remaining part of this paper is organized as follows. Some preliminary results are presented in Section 2. In Section
3, we are devoted to accomplishing the proof of Theorem \ref{Thm:MainTheorem1} and in Section 4 we present the proof of Theorem \ref{Thm:MainTheorem2}.

\section{Preliminary Results}

In this section, for the reader's convenience, firstly we introduce some basic definitions of fractional calculus.
The Liouville-Weyl fractional derivative of order $0<\alpha<1$ are defined as
\begin{equation}\label{eqn:RD}
_{-\infty}D^{\alpha}_x u(x)=\frac{d}{dx} {_{-\infty}I^{1-\alpha}_x u(x)}
\quad \mbox{and}\quad
_{x}D^{\alpha}_{\infty} u(x)=-\frac{d}{dx} {_{x}I^{1-\alpha}_{\infty} u(x)}.
\end{equation}
where $_{-\infty}I^{\alpha}_x$ and $_{x}I^{\alpha}_{\infty}$ are the left and right Liouville-Weyl fractional integrals of order $0<\alpha<1$ defined as
$$
_{-\infty}I^{\alpha}_x u(x)=\frac{1}{\Gamma(\alpha)}\int^x_{-\infty} (x-\xi)^{\alpha-1}u(\xi)d\xi
\quad \mbox{and}\quad
_{x}I^{\alpha}_{\infty} u(x)=\frac{1}{\Gamma(\alpha)}\int^{\infty}_{x}(\xi-x)^{\alpha-1}u(\xi)d\xi.
$$
Furthermore, for $u\in L^p(\R)$, $p\geq 1$, we have
$$
\mathcal{F}({_{-\infty}}I_{x}^{\alpha}u(x)) = (i\omega)^{-\alpha}\widehat{u}(\omega)\quad \quad \mbox{and}\quad \quad \mathcal{F}({_{x}}I_{\infty}^{\alpha}u(x)) = (-i\omega)^{-\alpha}\widehat{u}(\omega),
$$
and for $u\in C_{0}^{\infty}(\R)$, we have
$$
\mathcal{F}({_{-\infty}}D_{x}^{\alpha}u(x)) = (i\omega)^{\alpha}\widehat{u}(\omega)\quad \quad \mbox{and}\quad \quad \mathcal{F}({_{x}}D_{\infty}^{\alpha}u(x)) = (-i\omega)^{\alpha}\widehat{u}(\omega),
$$

In order to establish the variational structure which enables us to reduce the existence of solutions of (FHS)$_\lambda$ to find critical points of the
corresponding functional, it is necessary to consider some appropriate function spaces. Denote by $L^p(\mathbb{R},\mathbb{R}^n)$ ($1\leq p <\infty$) the Banach spaces of functions on $\mathbb{R}$ with values in $\mathbb{R}^n$ under
the norms
$$
\|u\|_{L^p}=\Bigl(\int_{\mathbb{R}}|u(t)|^p dt\Bigr)^{1/p},
$$
and $L^{\infty}(\mathbb{R},\mathbb{R}^n)$ is the Banach space of essentially bounded functions from $\mathbb{R}$ into $\mathbb{R}^n$ equipped with the norm
$$
\|u\|_{\infty}=\mbox{ess} \sup\left\{|u(t)|: t\in \mathbb{R} \right\}.
$$
Let $-\infty<a<b<+\infty$, $0< \alpha \leq 1$ and $1<p<\infty$. The fractional derivative space $E_{0}^{\alpha ,p}$ is defined by the closure of $C_{0}^{\infty}([a,b], \mathbb{R}^n)$ with respect to the norm
\begin{equation}\label{norm}
\|u\|_{\alpha ,p} = \left(\int_{a}^{b} |u(t)|^pdt + \int_{a}^{b}|{_{a}}D_{t}^{\alpha}u(t)|^pdt  \right)^{1/p}, \;\;\forall\; u\in E_{0}^{\alpha ,p}.
\end{equation}
Furthermore $(E_{0}^{\alpha ,p}, \|.\|_{\alpha ,p})$ is a reflexive and separable Banach space and can be characterized by $$E_{0}^{\alpha , p} = \{u\in L^{p}([a,b], \mathbb{R}^n) |  {_aD}_{t}^{\alpha}u \in L^{p}([a,b], \mathbb{R}^n)\;\mbox{and}\;u(a) = u(b) = 0\}.$$
\begin{proposition}\label{FC-FEprop3}
\cite{FJYZ} Let $0< \alpha \leq 1$ and $1 < p < \infty$. For all $u\in E_{0}^{\alpha ,p}$, we have
\begin{equation}\label{FC-FEeq3}
\|u\|_{L^{p}} \leq \frac{(b-a)^{\alpha}}{\Gamma (\alpha +1)} \|{_aD}_{t}^{\alpha}u\|_{L^{p}}.
\end{equation}
If $\alpha > 1/p$ and $\frac{1}{p} + \frac{1}{q} = 1$, then
\begin{equation}\label{FC-FEeq4}
\|u\|_{\infty} \leq \frac{(b-a)^{\alpha -1/p}}{\Gamma (\alpha)((\alpha - 1)q +1)^{1/q}}\|{_aD}_{t}^{\alpha}u\|_{L^{p}}.
\end{equation}
\end{proposition}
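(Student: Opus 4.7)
The plan is to establish both bounds first for test functions $u \in C_0^\infty([a,b],\R^n)$ and then extend by density to all of $E_0^{\alpha,p}$, using the fundamental-theorem-of-fractional-calculus identity
\begin{equation*}
u(t) \;=\; {_a}I_t^{\alpha}\bigl({_a}D_t^{\alpha} u\bigr)(t) \;=\; \frac{1}{\Gamma(\alpha)}\int_a^t (t-s)^{\alpha-1}\,{_a}D_s^{\alpha}u(s)\,ds,
\end{equation*}
which holds because the boundary condition $u(a)=0$ is built into $E_0^{\alpha,p}$ (so the ``correction'' boundary term in the Riemann--Liouville inversion formula vanishes). I would justify this identity at the outset by recalling that for $u\in C_0^\infty([a,b],\R^n)$ the operators ${_a}I_t^{\alpha}$ and ${_a}D_t^{\alpha}$ are inverse to each other in the appropriate sense.

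For inequality \eqref{FC-FEeq3}, the idea is to view the representation above as a one-sided convolution on $[a,b]$ with the kernel $\phi(s) = s^{\alpha-1}\mathbf{1}_{(0,b-a]}(s)$. Since $\phi \in L^1$ with
\begin{equation*}
\|\phi\|_{L^1} \;=\; \int_0^{b-a} s^{\alpha-1}\,ds \;=\; \frac{(b-a)^{\alpha}}{\alpha},
\end{equation*}
Young's convolution inequality (with exponents $1$, $p$, $p$) yields
\begin{equation*}
\|u\|_{L^p} \;\le\; \frac{1}{\Gamma(\alpha)}\,\|\phi\|_{L^1}\,\|{_a}D_t^{\alpha}u\|_{L^p} \;=\; \frac{(b-a)^{\alpha}}{\alpha\,\Gamma(\alpha)}\,\|{_a}D_t^{\alpha}u\|_{L^p},
\end{equation*}
and the identity $\alpha\,\Gamma(\alpha)=\Gamma(\alpha+1)$ gives the stated constant.

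For inequality \eqref{FC-FEeq4}, the pointwise estimate comes from H\"older's inequality applied to the integral representation with conjugate exponents $q,p$:
\begin{equation*}
|u(t)| \;\le\; \frac{1}{\Gamma(\alpha)}\left(\int_a^t (t-s)^{(\alpha-1)q}\,ds\right)^{1/q}\!\|{_a}D_t^{\alpha}u\|_{L^p}.
\end{equation*}
The inner integral converges precisely when $(\alpha-1)q>-1$, equivalently $\alpha>1/p$; evaluating it gives $(t-a)^{(\alpha-1)q+1}/((\alpha-1)q+1)$. Since $(\alpha-1)q+1 = (\alpha-1/p)q$, so $(t-a)^{(\alpha-1)q+1}\,^{1/q} = (t-a)^{\alpha-1/p}\le (b-a)^{\alpha-1/p}$, one obtains
\begin{equation*}
|u(t)| \;\le\; \frac{(b-a)^{\alpha-1/p}}{\Gamma(\alpha)\,((\alpha-1)q+1)^{1/q}}\,\|{_a}D_t^{\alpha}u\|_{L^p},
\end{equation*}
and taking the supremum in $t$ gives \eqref{FC-FEeq4}.

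There is no real obstacle---both inequalities are essentially Young's and H\"older's inequalities applied to the fractional integral representation. The only subtlety is that one must be careful about (i) verifying the inversion identity $u = {_a}I_t^{\alpha}({_a}D_t^{\alpha}u)$ (which requires $u(a)=0$, and this is exactly what distinguishes $E_0^{\alpha,p}$ from the larger space without boundary conditions), and (ii) extending from $C_0^\infty$ to $E_0^{\alpha,p}$, which is immediate from the definition of $E_0^{\alpha,p}$ as the closure of $C_0^\infty([a,b],\R^n)$ in the norm $\|\cdot\|_{\alpha,p}$, since both sides of each inequality are continuous with respect to that norm.
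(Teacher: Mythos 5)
Your argument is correct: the inversion identity $u={_a}I_t^{\alpha}({_a}D_t^{\alpha}u)$ (valid on $E_0^{\alpha,p}$ thanks to the vanishing boundary term), followed by Young's convolution inequality for \eqref{FC-FEeq3} and H\"older's inequality for \eqref{FC-FEeq4}, is exactly the standard proof of this proposition, and your constants and the condition $\alpha>1/p$ all check out. The paper itself offers no proof --- it imports the result from \cite{FJYZ} --- and your argument coincides with the one given there, so there is nothing to compare beyond noting that the density step for \eqref{FC-FEeq4} is justified precisely as you say, by applying the pointwise bound to differences of an approximating sequence in $C_0^\infty([a,b],\mathbb{R}^n)$.
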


\noindent
By (\ref{FC-FEeq3}), we can consider in $E_{0}^{\alpha ,p}$ the following norm
\begin{equation}\label{FC-FEeq5}
\|u\|_{\alpha ,p} = \|{_aD}_{t}^{\alpha}u\|_{L^{p}},
\end{equation}
which is equivalent to (\ref{norm}).

\begin{proposition}\label{FC-FEprop4}
\cite{FJYZ} Let $0< \alpha \leq 1$ and $1 < p < \infty$. Assume that $\alpha > \frac{1}{p}$ and $\{u_{k}\} \rightharpoonup u$ in $E_{0}^{\alpha ,p}$. Then
$u_{k} \to u$ in $C[a,b]$, i.e.
$$
\|u_{k} - u\|_{\infty} \to 0,\;k\to \infty.
$$
\end{proposition}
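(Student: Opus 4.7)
The plan is to deduce this result from the Arzel\`a-Ascoli theorem. I would establish uniform boundedness and equicontinuity of $\{u_k\}$ in $C[a,b]$, then identify the $C[a,b]$-limit with $u$ via the continuous embedding $E_0^{\alpha,p}\hookrightarrow L^p([a,b])$.

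\emph{Uniform boundedness.} Any weakly convergent sequence in a Banach space is norm-bounded by the Banach--Steinhaus principle, so $M:=\sup_k\|u_k\|_{\alpha,p}<\infty$. Inequality (\ref{FC-FEeq4}) of Proposition \ref{FC-FEprop3}, whose hypothesis $\alpha>1/p$ is exactly what we assume, then yields
$$\sup_k\|u_k\|_\infty\leq \frac{(b-a)^{\alpha-1/p}}{\Gamma(\alpha)((\alpha-1)q+1)^{1/q}}\,M.$$

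\emph{Equicontinuity.} Since every $u\in E_0^{\alpha,p}$ satisfies $u(a)=0$, the fractional fundamental theorem gives the representation
$$u_k(t)=\frac{1}{\Gamma(\alpha)}\int_a^t (t-s)^{\alpha-1}\,{}_aD_s^{\alpha}u_k(s)\,ds.$$
For $a\le t_1<t_2\le b$ I would split $u_k(t_2)-u_k(t_1)$ into the ``new'' piece $\frac{1}{\Gamma(\alpha)}\int_{t_1}^{t_2}(t_2-s)^{\alpha-1}\,{}_aD_s^{\alpha}u_k(s)\,ds$ and the ``kernel-difference'' piece $\frac{1}{\Gamma(\alpha)}\int_a^{t_1}\bigl[(t_1-s)^{\alpha-1}-(t_2-s)^{\alpha-1}\bigr]\,{}_aD_s^{\alpha}u_k(s)\,ds$. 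H\"older's inequality reduces each term to an $L^q$ norm of the kernel times $\|{}_aD_s^{\alpha}u_k\|_{L^p}\le M$. Direct integration on the first piece produces $\tfrac{(t_2-t_1)^{\alpha-1/p}}{((\alpha-1)q+1)^{1/q}}$, finite because $(\alpha-1)q+1>0$ is equivalent to $\alpha>1/p$. For the second piece I would use the elementary inequality $(A-B)^q\le A^q-B^q$ for $A\ge B\ge0$ and $q\ge 1$: after the substitution $r=t_1-s$, $h=t_2-t_1$, the integrand is majorised by $r^{(\alpha-1)q}-(h+r)^{(\alpha-1)q}$, whose integral over $[0,t_1-a]$ computes explicitly and is bounded by $h^{(\alpha-1)q+1}/((\alpha-1)q+1)$. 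This delivers an $O((t_2-t_1)^{\alpha-1/p})$ modulus of continuity independent of $k$.

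\emph{Identification of the limit.} By Arzel\`a--Ascoli, every subsequence of $\{u_k\}$ has a sub-subsequence converging uniformly to some $\tilde u\in C[a,b]$. Inequality (\ref{FC-FEeq3}) shows the embedding $E_0^{\alpha,p}\hookrightarrow L^p([a,b])$ is continuous, so it preserves weak convergence, giving $u_k\rightharpoonup u$ in $L^p$. At the same time, uniform convergence on the bounded interval gives $u_k\to \tilde u$ in $L^p$ along the sub-subsequence, hence $u_k\rightharpoonup\tilde u$ in $L^p$; uniqueness of weak limits forces $\tilde u=u$ a.e., and continuity of both functions upgrades this to pointwise equality on $[a,b]$. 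A standard subsequence-of-subsequence argument promotes the uniform convergence to the full sequence.

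The only step that really uses fractional calculus is the kernel-difference estimate: because for $\alpha<1$ the kernel $(t_i-s)^{\alpha-1}$ is singular at the endpoint, one has to check that the difference still integrates to the correct $(t_2-t_1)^{\alpha-1/p}$ rate. The inequality $(A-B)^q\le A^q-B^q$ is what turns the algebraic cancellation in the kernels into a quantitative gain in the exponent and is the only nonroutine calculation in the proof.
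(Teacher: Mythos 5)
The paper does not prove this proposition itself --- it imports it from \cite{FJYZ} --- and your argument is correct and is essentially the standard proof given there: uniform boundedness from Banach--Steinhaus plus (\ref{FC-FEeq4}), a uniform $(t_2-t_1)^{\alpha-1/p}$ H\"older modulus via the representation $u={_aI_t^{\alpha}}({_aD_t^{\alpha}}u)$ and the superadditivity inequality $(A-B)^q\le A^q-B^q$, then Arzel\`a--Ascoli and identification of the uniform limit with the weak limit through the embedding into $L^p$. The only point worth making explicit is that the representation formula, obvious for $C_0^{\infty}$ functions, extends to all of $E_0^{\alpha,p}$ by density together with the boundedness of ${_aI_t^{\alpha}}:L^p\to C[a,b]$ when $\alpha>1/p$.
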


%We denote by $E^{\alpha} = E_{0}^{\alpha ,2}$, this is a Hilbert space with respect to the norm $\|u\|_{\alpha} = \|u\|_{\alpha ,2}$ given by (\ref{FC-FEeq5}).

For $\alpha>0$, consider the Liouville-Weyl fractional spaces
$$
I^{\alpha}_{-\infty}=\overline{C^{\infty}_0(\mathbb{R},\mathbb{R}^n)}^{\|\cdot\|_{I^{\alpha}_{-\infty}}},
$$
where
\begin{equation}\label{eqn:defn Rnorm}
\|u\|_{I^{\alpha}_{-\infty}}=\Bigl(\int_{\mathbb{R}}u^2(x)dx+ \int_{\mathbb{R}}|_{-\infty}D^{\alpha}_x u(x)|^2dx\Bigr)^{1/2}.
\end{equation}
Furthermore, we introduce the fractional Sobolev space $H^{\alpha}(\mathbb{R},\mathbb{R}^n)$ of order  $0<\alpha<1$ which is defined as
\begin{equation}\label{eqn:alphanorm}
H^{\alpha}=\overline{C^{\infty}_0(\mathbb{R},\mathbb{R}^n)}^{\|\cdot\|_{\alpha}},
\end{equation}
where
$$
\|u\|_{\alpha}=\Bigl(\int_{\mathbb{R}}u^2(x)dx+ \int_{\mathbb{R}}|w|^{2\alpha}\widehat{u}^2(w)dw\Bigr)^{1/2}.
$$
Note that, a function $u\in L^2(\mathbb{R},\mathbb{R}^n)$ belongs to $I^{\alpha}_{-\infty}$ if and only if
$$
|w|^{\alpha}\widehat{u}\in L^2(\mathbb{R},\mathbb{R}^n).
$$
Therefore, $I^{\alpha}_{-\infty}$ and $H^{\alpha}$ are equivalent with equivalent norm, for more details see \cite{ErvinR06}.
\begin{lemma}\label{Lem:LinftyContH}\cite[Theorem 2.1]{Torres12}
If $\alpha>1/2$, then $H^{\alpha}\subset C(\mathbb{R},\mathbb{R}^n)$ and there is a constant $C_\infty=C_{\alpha,\infty}$ such that
\begin{equation}\label{12}
\|u\|_{\infty}=\sup_{x\in \mathbb{R}}|u(x)|\leq C_\infty \|u\|_{\alpha}.
\end{equation}
\end{lemma}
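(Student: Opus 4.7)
The plan is to prove the embedding via Fourier analysis, exploiting the definition of $H^{\alpha}$ as the closure of $C_0^{\infty}(\R,\R^n)$ under the norm $\|u\|_{\alpha}^2 = \int_{\R} u^2\,dx + \int_{\R} |w|^{2\alpha} \widehat{u}^2(w)\,dw$, combined with Plancherel's identity (so that $\|u\|_{\alpha}^2$ is comparable to $\int_{\R} (1+|w|^{2\alpha}) |\widehat{u}(w)|^2 dw$, up to the constant coming from the convention for the Fourier transform).

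First I would establish the pointwise estimate on the dense subspace $C_0^{\infty}(\R,\R^n)$. For such $u$, the Fourier inversion formula gives $u(x) = c \int_{\R} e^{ix w}\widehat{u}(w)\,dw$, so $|u(x)| \le c\,\|\widehat{u}\|_{L^1}$. Splitting the integrand via the weight $(1+|w|^{2\alpha})^{1/2}$ and applying Cauchy--Schwarz yields
$$
\|\widehat{u}\|_{L^1} \le \left(\int_{\R} \frac{dw}{1+|w|^{2\alpha}}\right)^{1/2} \left(\int_{\R} (1+|w|^{2\alpha})|\widehat{u}(w)|^2\,dw\right)^{1/2}.
$$
The second factor is (a constant multiple of) $\|u\|_{\alpha}$ by Plancherel, while the first factor is finite precisely because the hypothesis $\alpha>1/2$ ensures the tail integrand decays like $|w|^{-2\alpha}$ with $2\alpha>1$. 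This produces the desired constant $C_{\alpha,\infty}$ with $\|u\|_{\infty} \le C_{\infty}\|u\|_{\alpha}$ for every $u \in C_0^{\infty}(\R,\R^n)$.

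Next I would extend the inequality and obtain continuity by density. Given $u \in H^{\alpha}$, choose $\{u_k\} \subset C_0^{\infty}(\R,\R^n)$ with $u_k \to u$ in $H^{\alpha}$. Applying the estimate to $u_j-u_k$ shows that $\{u_k\}$ is Cauchy in $L^{\infty}(\R,\R^n)$, hence converges uniformly to some $\widetilde{u} \in C(\R,\R^n)$. Since uniform convergence implies $L^2$--convergence on compact sets and $u_k \to u$ in $L^2$, we identify $\widetilde{u}=u$ almost everywhere, so $u$ admits a continuous representative, giving the inclusion $H^{\alpha} \subset C(\R,\R^n)$. Passing to the limit in the inequality $\|u_k\|_{\infty} \le C_{\infty}\|u_k\|_{\alpha}$ yields $\|u\|_{\infty} \le C_{\infty}\|u\|_{\alpha}$.

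The only substantive point is the convergence of $\int_{\R}(1+|w|^{2\alpha})^{-1}\,dw$, which is exactly where the assumption $\alpha>1/2$ is used; everything else is a routine density and Cauchy--Schwarz argument, so no real obstacle is expected beyond keeping track of Fourier-transform normalization constants.
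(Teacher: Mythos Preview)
Your argument is correct and is the standard Fourier-analytic proof of the Sobolev embedding $H^{\alpha}\hookrightarrow L^{\infty}\cap C$ for $\alpha>1/2$: Fourier inversion plus Cauchy--Schwarz against the weight $(1+|w|^{2\alpha})^{-1/2}$, whose integrability is exactly the condition $\alpha>1/2$, followed by a routine density extension.

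There is nothing to compare against in the present paper: the lemma is stated with a citation to \cite[Theorem 2.1]{Torres12} and no proof is given here. Your write-up therefore supplies what the paper omits, and the approach you take is in fact the same one used in the cited reference.
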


\begin{remark}\label{Rem:Lp}
From Lemma \ref{Lem:LinftyContH}, we know that if $u\in H^{\alpha}$ with $1/2<\alpha<1$, then $u\in L^p(\mathbb{R},\mathbb{R}^n)$ for all $p\in [2,\infty)$, since
$$
\int_{\mathbb{R}}|u(x)|^p dx \leq \|u\|^{p-2}_{\infty}\|u\|^2_{L^2}.
$$
\end{remark}

Now, we introduce the fractional space which we will use to construct the variational framework for (FHS)$_\lambda$. Let
$$
X^{\alpha}=\Bigl\{u\in H^{\alpha}: \int_{\mathbb{R}}[|_{-\infty}D^{\alpha}_{t}u(t)|^2+(L(t)u(t),u(t))]dt<\infty\Bigr\},
$$
then $X^{\alpha}$ is a reflexive and separable Hilbert space with the inner product
$$
\langle u,v \rangle_{X^{\alpha}}=\int_{\mathbb{R}}[(_{-\infty}D^{\alpha}_{t}u(t),_{-\infty}D^{\alpha}_{t}v(t))+(L(t)u(t),v(t))]dt
$$
and the corresponding norm is
$$
\|u\|^2_{X^{\alpha}}=\langle u,u \rangle_{X^{\alpha}}.
$$
For $\lambda>0$, we also need the following inner product
$$
\langle u,v \rangle_{X^{\alpha,\lambda}}=\int_{\mathbb{R}}[(_{-\infty}D^{\alpha}_{t}u(t),_{-\infty}D^{\alpha}_{t}v(t))+\lambda(L(t)u(t),v(t))]dt
$$
and the corresponding norm is
$$
\|u\|^2_{X^{\alpha,\lambda}}=\langle u,u \rangle_{X^{\alpha,\lambda}}.
$$

\begin{lemma}\label{Lem:XcontH}
\cite{ZhangTorres} Suppose $L(t)$ satisfies {\rm ($\mathcal{L}$)$_1$} and {\rm ($\mathcal{L}$)$_2$}, then $X^{\alpha}$ is continuously embedded in $H^{\alpha}$.
\end{lemma}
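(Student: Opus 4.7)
The goal is a continuity bound $\|u\|_\alpha \leq C\|u\|_{X^\alpha}$ for every $u\in X^\alpha$. By Plancherel the $H^\alpha$ norm squared equals $\|u\|_{L^2}^2 + \|{_{-\infty}}D^\alpha_t u\|_{L^2}^2$, and since $(L(t)u(t),u(t))\geq l(t)|u(t)|^2\geq 0$, the derivative part is controlled for free:
\begin{equation*}
\|{_{-\infty}}D^\alpha_t u\|_{L^2}^2 \leq \|u\|_{X^\alpha}^2.
\end{equation*}
The entire problem therefore reduces to controlling $\|u\|_{L^2}$ by $\|u\|_{X^\alpha}$, and the natural plan is to split $\mathbb{R} = \{l\geq c\} \cup \{l<c\}$ and treat each piece with a different tool.

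On the large part $\{l\geq c\}$, the pointwise inequality from $(\mathcal{L})_1$ gives
\begin{equation*}
\int_{\{l\geq c\}}|u(t)|^2\,dt \leq \frac{1}{c}\int_{\mathbb{R}}(L(t)u(t),u(t))\,dt \leq \frac{1}{c}\|u\|_{X^\alpha}^2.
\end{equation*}
On the exceptional part $\{l<c\}$, the matrix $L$ provides no pointwise bound, so instead I use the Sobolev embedding from Lemma \ref{Lem:LinftyContH} together with the sharp measure assumption:
\begin{equation*}
\int_{\{l<c\}}|u(t)|^2\,dt \leq \mathrm{meas}\{l<c\}\cdot \|u\|_\infty^2 \leq C_\infty^2\,\mathrm{meas}\{l<c\}\cdot \|u\|_\alpha^2.
\end{equation*}

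Setting $\delta := C_\infty^2\,\mathrm{meas}\{l<c\}$, hypothesis $(\mathcal{L})_1$ guarantees $\delta<1$. Expanding $\|u\|_\alpha^2 = \|u\|_{L^2}^2 + \|{_{-\infty}}D^\alpha_t u\|_{L^2}^2$ in the second estimate and invoking the already-established derivative bound, the two pieces combine to
\begin{equation*}
\|u\|_{L^2}^2 \leq \delta\,\|u\|_{L^2}^2 + \bigl(\delta + \tfrac{1}{c}\bigr)\|u\|_{X^\alpha}^2,
\end{equation*}
which absorbs to yield $\|u\|_{L^2}^2 \leq \frac{\delta + 1/c}{1-\delta}\|u\|_{X^\alpha}^2$. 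Coupling this with the derivative bound closes the continuous embedding.

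The only genuinely delicate point is that the absorption step is legitimate, i.e.\ that the coefficient $\delta$ appearing in front of $\|u\|_{L^2}^2$ on the right-hand side is strictly smaller than $1$. This is precisely what the quantitative smallness $\mathrm{meas}\{l<c\}<1/C_\infty^2$ in $(\mathcal{L})_1$ delivers; without it the split would only bound $\|u\|_{L^2}$ by itself plus the $X^\alpha$ norm, and no embedding would follow. Condition $(\mathcal{L})_2$ is not explicitly needed for this particular continuity estimate and only enters later, when identifying the limiting Dirichlet problem (\ref{eqn:BVP}).
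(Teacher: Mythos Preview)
Your argument is correct and is precisely the approach behind this lemma: the paper does not reprove it here but cites \cite{ZhangTorres}, and the computation in Remark~\ref{keynta} (equations (\ref{13})--(\ref{14})) shows the same split $\mathbb{R}=\{l\geq c\}\cup\{l<c\}$, the same use of $(\mathcal{L})_1$ on the first piece, and the same $L^\infty$-embedding plus measure bound on the second, followed by absorption via $C_\infty^2\,\mathrm{meas}\{l<c\}<1$. Your remark that $(\mathcal{L})_2$ is not actually used in the embedding estimate is also accurate.
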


\begin{remark}\label{keynta}
{\rm
Under the same conditions of Lemma \ref{Lem:XcontH}, for all
$\lambda\geq \frac{1}{c C_\infty^2 \, meas \{l<c\}}$, we also obtain
\begin{equation}\label{13}
\int_{\mathbb{R}}|u(t)|^2 dt\leq \frac{C_\infty^2\, meas\{l<c\}}{1-C_\infty^2\, meas\{l<c\}}\|u\|_{X^{\alpha,\lambda}}=\frac{1}{\Theta}\|u\|_{X^{\alpha,\lambda}}^2
\end{equation}
and
\begin{equation}\label{14}
\|u\|_\alpha^2\leq \Bigl(1+\frac{C_{\infty}^2\, meas\{l<c\}}{1-C_{\infty}^2\, meas \{l<c\}}\Bigr)\|u\|_{X^\alpha}^2=(1+\frac{1}{\Theta})\|u\|^2_{X^{\alpha,\lambda}}.
\end{equation}
Furthermore, for every $p\in (2,\infty)$ and $\lambda\geq \frac{1}{c C_\infty^2 \, meas \{l<c\}}$, we have
\begin{equation}\label{15}
\begin{split}
\int_{\mathbb{R}}|u(t)|^p dt\leq \mathcal{K}_{p}^{p}\|u\|_{X^{\alpha,\lambda}}^p.
\end{split}
\end{equation}
where $\mathcal{K}_{p}^{p} = \frac{1}{\Theta^{\frac{p}{2}}\, (meas\{l<c\})^{\frac{p-2}{2}}}$. For more details, see \cite{Torres15, ZhangTorres}.}
\end{remark}

\section{ Proof of Theorem \ref{Thm:MainTheorem1}}

The aim of this section is to establish the proof of Theorem \ref{Thm:MainTheorem1}. Consider the functional $I: X^{\alpha,\lambda}\rightarrow \mathbb{R}$ given by
\begin{equation}\label{mt01}
\begin{aligned}
I_\lambda (u)&=\int_{\mathbb{R}}\Bigl[\frac{1}{2}|_{-\infty}D_t^{\alpha}u(t)|^2+\frac{1}{2}(\lambda L(t)u(t),u(t))-W(t,u(t))\Bigr]dt\\
&=\dfrac{1}{2}\|u\|^2_{X^{\alpha,\lambda}}-\int_{\mathbb{R}}W(t,u(t))dt.
\end{aligned}
\end{equation}
Under the conditions of Theorem \ref{Thm:MainTheorem1}, we note that $I\in C^1(X^{\alpha,\lambda},\mathbb{R})$, and
\begin{equation}\label{mt02}
I'_\lambda (u)v=\int_{\mathbb{R}}\Bigl[(_{-\infty}D_t^{\alpha}u(t), _{-\infty}D_t^{\alpha}v(t))+(\lambda L(t)u(t),v(t))-(\nabla W(t,u(t)),v(t))\Bigr]dt
\end{equation}
for all $u$, $v\in X^{\alpha}$. In particular we have
\begin{equation}\label{mt03}
I'_\lambda (u)u =\|u\|^2_{X^{\alpha,\lambda}}-\int_{\mathbb{R}}(\nabla W(t,u(t)),u(t))dt.
\end{equation}
\begin{remark}\label{ineq}
It follows from $(W_1)$ and $(W_4)$ that
$$
|\nabla W(t,u)|^\sigma \leq \frac{C_0}{2}|\nabla W(t,u)||u|^{\sigma +1 }\;\;\mbox{for}\;\;|u|\geq R.
$$
Thus, by ($W_1$), for any $\epsilon >0$, there is $C_\epsilon >0$ such that
\begin{equation}\label{mt04}
|\nabla W(t,u)| \leq \epsilon |u| + C_\epsilon |u|^{p-1},\;\;\forall (t,u) \in \mathbb{R} \times \mathbb{R}^N
\end{equation}
and
\begin{equation}\label{mt05}
|W(t,u)| \leq \frac{\epsilon}{2}|u|^2 + \frac{C_\epsilon}{p}|u|^p\;\;\forall (t,u)\in \mathbb{R} \times \mathbb{R}^N,
\end{equation}
where $p = \frac{2\sigma }{\sigma -1}>2$.
\end{remark}

\begin{lemma}\label{GC1}
Suppose that {\rm ($\mathcal{L}$)$_1$}-{\rm ($\mathcal{L}$)$_3$}, {\rm(W$_1$)} and {\rm(W$_2$)} are satisfied. Then
\begin{enumerate}
\item[\fbox{i}] There exists $\rho>0$ and $\eta>0$ such that
$$
\inf_{\|u\|_{X^{\alpha,\lambda}}=\rho} I_\lambda (u)>\eta\quad for \,\, all \,\, \lambda\geq \frac{1}{cC_\infty^2 \, meas \{l<c\}}.
$$
\item[\fbox{ii}] Let $\rho>0$ defined in $(i)$, then there exists $e\in X^{\alpha,\lambda}$ with
$\|e\|_{X^{\alpha,\lambda}}>\rho$ such that $I_\lambda(e)<0$ for all $\lambda>0$.
\end{enumerate}
\end{lemma}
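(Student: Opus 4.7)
\textbf{Proof plan for Lemma \ref{GC1}.}

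For part (i), my plan is to exploit the elementary growth bound \eqref{mt05} together with the $\lambda$-uniform embeddings recorded in Remark \ref{keynta}. Pick $\epsilon=\Theta/2$ in \eqref{mt05}, so that
\[
\int_{\mathbb{R}} W(t,u(t))\,dt \leq \frac{\epsilon}{2}\|u\|_{L^2}^2 + \frac{C_\epsilon}{p}\|u\|_{L^p}^p
\leq \frac{\epsilon}{2\Theta}\|u\|_{X^{\alpha,\lambda}}^2 + \frac{C_\epsilon}{p}\mathcal{K}_p^p\,\|u\|_{X^{\alpha,\lambda}}^p
\]
for every $\lambda\geq 1/(cC_\infty^2\,\mathrm{meas}\{l<c\})$, using \eqref{13} and \eqref{15}. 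Plugging this into \eqref{mt01} yields
\[
I_\lambda(u)\geq \Bigl(\tfrac12-\tfrac{\epsilon}{2\Theta}\Bigr)\|u\|_{X^{\alpha,\lambda}}^2 - \tfrac{C_\epsilon}{p}\mathcal{K}_p^p\,\|u\|_{X^{\alpha,\lambda}}^{p}
= \tfrac14\|u\|_{X^{\alpha,\lambda}}^{2} - \tfrac{C_\epsilon}{p}\mathcal{K}_p^p\,\|u\|_{X^{\alpha,\lambda}}^{p}.
\]
Since $p>2$, I then choose $\rho>0$ so small that $\tfrac14\rho^2-\tfrac{C_\epsilon}{p}\mathcal{K}_p^p\rho^p>0$ and set $\eta:=\tfrac14\rho^2-\tfrac{C_\epsilon}{p}\mathcal{K}_p^p\rho^p$. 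The constants $\Theta$ and $\mathcal{K}_p$ do not depend on $\lambda$ (once $\lambda$ lies above the threshold), so the bound is uniform, as required.

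For part (ii), the key idea is to use condition $(\mathcal{L})_3$ to kill the $\lambda$-dependent part of the norm. Fix any $\phi\in C_0^\infty(T,\mathbb{R}^n)\setminus\{0\}$ and extend it by zero to all of $\mathbb{R}$; then $\phi\in X^\alpha$, and because $L(t)\equiv0$ on $\overline{T}\supset\mathrm{supp}\,\phi$, one has
\[
\|\phi\|_{X^{\alpha,\lambda}}^2 = \int_{\mathbb{R}}|{_{-\infty}D_t^\alpha}\phi(t)|^2\,dt,
\]
which is \emph{independent of} $\lambda$. For $s>0$ I would write
\[
\frac{I_\lambda(s\phi)}{s^2}=\frac12\|\phi\|_{X^{\alpha,\lambda}}^2 - \int_{\{\phi\ne 0\}}\frac{W(t,s\phi(t))}{|s\phi(t)|^2}|\phi(t)|^2\,dt,
\]
and invoke $(W_3)$ together with Fatou's lemma: the integrand tends to $+\infty$ pointwise on $\{\phi\ne 0\}$ as $s\to+\infty$, and $(W_2)$ gives nonnegativity, so the integral blows up. Hence $I_\lambda(s\phi)/s^2\to-\infty$ as $s\to+\infty$. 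Choosing $s_0$ large enough that both $\|s_0\phi\|_{X^{\alpha,\lambda}}>\rho$ and $I_\lambda(s_0\phi)<0$, I set $e:=s_0\phi$. Crucially, the whole estimate depends on $\lambda$ only through the first term, which is constant in $\lambda$, so the same $e$ works for every $\lambda>0$.

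The only mildly delicate point I anticipate is justifying the Fatou step cleanly (and checking that $\phi$ extended by zero really lies in $X^\alpha$, which follows because $\phi\in C_c^\infty(\mathbb{R},\mathbb{R}^n)\subset H^\alpha$ and $L\phi\cdot\phi$ is compactly supported and continuous). Everything else is a routine combination of \eqref{mt05}, \eqref{13}, \eqref{15} and $(\mathcal{L})_3$.
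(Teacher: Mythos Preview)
Your proposal is correct and follows essentially the same approach as the paper: for (i) both arguments combine the growth estimate \eqref{mt05} with the $\lambda$-uniform embeddings \eqref{13} and \eqref{15}, and for (ii) both pick a test function supported in $T$ so that the $\lambda L(t)$-term vanishes and then use $(W_3)$ to force $I_\lambda(s\phi)/s^2\to-\infty$. The only cosmetic difference is that the paper extracts from $(W_3)$ the explicit lower bound $W(t,u)>|u|^2/\epsilon - R^2/\epsilon$ for $|u|\ge R$ in place of your Fatou argument, but this is an equivalent way to justify the same limit.
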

\begin{proof}
\begin{enumerate}
\item[\fbox{i}] By (\ref{mt05}) and Remark \ref{keynta}, we obtain
$$
\begin{aligned}
I_{\lambda}(u) & = \frac{1}{2}\|u\|_{X^{\alpha, \lambda}}^2 - \int_{\mathbb{R}} W(t, u(t))dt\\
& \geq \frac{1}{2}\|u\|_{X^{\alpha, \lambda}}^{2} - \frac{\epsilon}{2}\int_{\mathbb{R}}|u(t)|^2dt - \frac{C_\epsilon}{p}\int_{\mathbb{R}} |u(t)|^pdt\\
&\geq \frac{1}{2}\left( 1 - \frac{\epsilon}{\Theta}\right)\|u\|_{X^{\alpha, \lambda}}^{2} - \frac{C_\epsilon}{p\Theta^{\frac{p}{2}}(meas\{l<c\})^{\frac{p-2}{2}}}\|u\|_{X^{\alpha, \lambda}}^{p}.
\end{aligned}
$$
Let $\epsilon >0$ small enough such that $1- \frac{\epsilon}{\Theta}>0$ and $\|u\|_{X^{\alpha, \lambda}} = \rho$. Since $p>2$, taking $\rho$ small enough such that
$$
\frac{1}{2}\left( 1-\frac{\epsilon}{\Theta} \right) - \frac{C_\epsilon}{p\Theta^{\frac{p}{2}}(meas\{l<c\})^{\frac{p-2}{2}}} \rho^{p-2}>0.
$$
Therefore
$$
I_\lambda (u) \geq \rho^2\left[ \frac{1}{2}\left( 1-\frac{\epsilon}{\Theta} \right) - \frac{C_\epsilon}{p \Theta^{\frac{p}{2}}(meas\{l<c\})^{\frac{p-2}{2}}} \rho ^{p-2}\right] : = \eta >0.
$$

\item[\fbox{ii}] By $(\mathcal{L})_3$ and without loss of generality let $T = (-\varrho, \varrho )\subset J$ such that $L(t) \equiv 0$. Let $\psi \in C_{0}^{\infty}(\mathbb{R}, \R^n)$ such that $supp(\psi) \subset (-\tau, \tau)$, for some $\tau< \varrho$. Hence
\begin{equation}\label{ss00}
\begin{aligned}
0&\leq \int_{\R}\langle L(t)\psi , \psi\rangle dt = \int_{supp(\psi)} \langle L(t)\psi, \psi\rangle dt \leq \int_{-\tau}^{\tau} \langle L(t)\psi, \psi\rangle dt \leq \int_{T} \langle L(t)\psi, \psi\rangle dt = 0.
\end{aligned}
\end{equation}
On the other hand, by $(W_3)$, for any $\epsilon >0$, there exists $R>0$ such that
$$
W(t,u) > \frac{|u|^2}{\epsilon} - \frac{R^2}{\epsilon}\;\;\mbox{for all}\;\;|u|\geq R.
$$
Then, by taking $\epsilon \to 0$ we get
\begin{equation}\label{ss01}
\lim_{|\sigma| \to \infty} \int_{supp(\psi)} \frac{W(t, \sigma \psi)}{|\sigma|^2}dt = +\infty.
%&\geq \frac{1}{\epsilon}\int_{supp(\psi)} |\psi|^2dt - \frac{R^2}{\epsilon |\sigma|^2}\int_{supp(\psi)}dt
\end{equation}
Hence, by (\ref{ss00}) and (\ref{ss01}) we obtain
\begin{equation}\label{ss02}
\frac{I_\lambda (\sigma \psi)}{|\sigma|^2}  = \frac{1}{2}\int_{\R} |_{-\infty}D_{t}^{\alpha}\psi(t)|^2dt - \int_{\R} \frac{W(t, \sigma \psi)}{|\sigma|^2}dt \to -\infty,
%\end{aligned}
\end{equation}
as $|\sigma| \to \infty$. Therefore, if $\sigma_0$ is large enough and $e = \sigma_0 \psi$ one gets $I_\lambda (e) <0$.
\end{enumerate}
\end{proof}

Since we have loss of compactness we need the following compactness results to recover the Cerami condition for $I_\lambda$.

\begin{lemma}\label{PS1}
Suppose that $(\mathcal{L})_1 - (\mathcal{L})_3$, $(W_1) - (W_4)$ be satisfied. If $u_n \rightharpoonup u$ in $X^{\alpha, \lambda}$, then
\begin{equation}\label{mt07}
I_\lambda (u_n - u) = I_\lambda (u_n) - I_\lambda (u) + o(1)\;\;\mbox{as}\;\;n\to +\infty
\end{equation}
and
\begin{equation}\label{mt08}
I'_\lambda(u_n-u) = I'_\lambda(u_n) - I'_\lambda (u) + o(1) \;\;\mbox{as}\;\;n\to +\infty.
\end{equation}
In particular, if $I_\lambda (u_n) \to c$ and $I'_\lambda(u_n) \to 0$, then $I'_\lambda(u) = 0$ after passing to a subsequence.
\end{lemma}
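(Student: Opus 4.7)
The plan is to split $I_\lambda$ into its Hilbertian quadratic piece $\tfrac12\|u\|^2_{X^{\alpha,\lambda}}$ and the integral nonlinearity $u\mapsto\int_\mathbb{R} W(t,u(t))\,dt$, and handle each separately. For the quadratic piece, weak convergence $u_n\rightharpoonup u$ in the Hilbert space $X^{\alpha,\lambda}$ gives
\begin{equation*}
\tfrac12\|u_n-u\|^2_{X^{\alpha,\lambda}}-\tfrac12\|u_n\|^2_{X^{\alpha,\lambda}}+\tfrac12\|u\|^2_{X^{\alpha,\lambda}}=-\langle u_n-u,u\rangle_{X^{\alpha,\lambda}}=o(1),
\end{equation*}
and similarly the bilinear contributions to $I'_\lambda$ cancel up to $o(1)$ uniformly on the unit ball $\|v\|_{X^{\alpha,\lambda}}\le 1$. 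Everything then reduces to a Brezis--Lieb-type splitting of the two nonlinear terms $\int W(t,u_n)\,dt$ and $\int\nabla W(t,u_n)\cdot v\,dt$.

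As a preparation I would pass to a subsequence with $u_n\to u$ a.e.\ on $\mathbb{R}$: by Lemma \ref{Lem:XcontH} and \eqref{14} the sequence is bounded in $H^\alpha$, hence strongly convergent in $L^2$ on every bounded interval by the compact fractional Sobolev embedding, and a standard diagonal argument yields the a.e.\ subsequence. With this in hand, \eqref{mt07} follows from the classical Brezis--Lieb trick: writing
\begin{equation*}
W(t,u_n)-W(t,u_n-u)-W(t,u)=\int_0^1\bigl[\nabla W(t,u_n-u+su)-\nabla W(t,su)\bigr]\cdot u\,ds,
\end{equation*}
the growth estimate \eqref{mt04} together with the uniform $L^2$ and $L^p$ bounds on $\{u_n\}$ coming from \eqref{13} and \eqref{15} supplies a dominating family to which Vitali's convergence theorem applies, forcing the difference to zero.

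The same scheme applied to the scalar integrand $\bigl[\nabla W(t,u_n)-\nabla W(t,u_n-u)-\nabla W(t,u)\bigr]\cdot v$ yields \eqref{mt08}, but now the convergence must be uniform on $\|v\|_{X^{\alpha,\lambda}}\le 1$, since \eqref{mt08} is a statement in the dual norm. H\"older with conjugate exponents $(p,p/(p-1))$ reduces the task to $L^{p/(p-1)}$-convergence of $\nabla W(t,u_n)-\nabla W(t,u_n-u)-\nabla W(t,u)$, which in turn follows from \eqref{mt04}, the uniform $L^p$-bound \eqref{15} on $\{u_n\}$, and the a.e.\ convergence. This dual-norm uniformity on an unbounded domain is the main technical obstacle: compact embeddings alone do not suffice, and one must exploit the subcritical growth together with \eqref{15} to propagate uniform tail smallness.

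Finally, for the concluding assertion, once \eqref{mt08} is in hand it suffices to show $I'_\lambda(u)v=0$ for each $v$. For every fixed $v\in X^{\alpha,\lambda}$ the quadratic part of $I'_\lambda(u_n)v$ converges to that of $I'_\lambda(u)v$ by the definition of weak convergence, while $\int_\mathbb{R}\nabla W(t,u_n)\cdot v\,dt\to\int_\mathbb{R}\nabla W(t,u)\cdot v\,dt$ follows from the a.e.\ convergence, the growth \eqref{mt04}, and dominated convergence against an $L^2+L^p$ majorant built from the uniform bounds \eqref{13} and \eqref{15}. Since $I'_\lambda(u_n)\to 0$ in $(X^{\alpha,\lambda})^*$, in particular $I'_\lambda(u_n)v\to 0$, forcing $I'_\lambda(u)v=0$ for every $v$ and thus $I'_\lambda(u)=0$.
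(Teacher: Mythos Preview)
Your overall framework matches the paper's: handle the Hilbert-quadratic part by weak convergence and reduce everything to the nonlinear Brezis--Lieb identities \eqref{mt09}--\eqref{mt10}; your direct argument for $I'_\lambda(u)=0$ by testing against fixed $\varphi$ is also how the paper concludes. The gap is in your route to \eqref{mt08}. You propose to apply H\"older with exponents $(p,p/(p-1))$ and reduce to $L^{p/(p-1)}$-convergence of $g_n:=\nabla W(\cdot,u_n)-\nabla W(\cdot,u_n-u)-\nabla W(\cdot,u)$. But $p>2$ forces $p/(p-1)<2$, and on the unbounded line the embeddings of Remark~\ref{keynta} only control $\|u\|_{L^q}$ for $q\ge 2$; the linear piece $\epsilon|u|$ in the growth bound \eqref{mt04} therefore need not lie in $L^{p/(p-1)}(\mathbb{R})$, so $g_n$ is not known to belong to that space and the H\"older step as written does not go through. (Reducing instead to $L^2$-convergence of $g_n$ is viable, since $|u|^{p-1}\in L^2$ by interpolation between $L^2$ and $L^\infty$, but then the Brezis--Lieb step in $L^2(\mathbb{R})$ still requires a genuine domination argument, not just the list of ingredients you cite.)

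The paper sidesteps this dual-space difficulty entirely. Rather than estimating $g_n$ in a Lebesgue norm and then pairing with $v$, it argues by contradiction, fixes a near-maximizing unit test function $\varphi_0$, and estimates the \emph{scalar} integrand $|\langle g_n(t),\varphi_0(t)\rangle|$ pointwise via \eqref{mt04} and Young's inequality, arranging a free small parameter $\epsilon$ on the $|u_n-u|$ contributions:
\[
|\langle g_n,\varphi_0\rangle|\le C\bigl(\epsilon|u|^2+\epsilon|u_n-u|^2+\epsilon|\varphi_0|^2+C'|u|^p+\epsilon|u_n-u|^p+C''|\varphi_0|^p\bigr).
\]
Subtracting off $C\epsilon(|u_n-u|^2+|u_n-u|^p)$ leaves a nonnegative sequence $h_n$ dominated by the fixed $L^1$ function built from $|u|^2,|u|^p,|\varphi_0|^2,|\varphi_0|^p$; dominated convergence gives $\int h_n\to 0$, while the subtracted piece is uniformly bounded in $n$ and vanishes with $\epsilon$. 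This ``split off the $|u_n-u|$ terms with a small coefficient and dominate the remainder'' device is exactly what absorbs the lack of compactness on $\mathbb{R}$, and it is the step your sketch is missing.
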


\begin{proof}
Since $u_n \rightharpoonup u$ in $X^{\alpha, \lambda}$, we have $\langle u_n -u, u\rangle_{X^{\alpha, \lambda}} \to 0$ as $n\to \infty$, which implies that
$$
\begin{aligned}
\|u_n\|_{X^{\alpha, \lambda}}^{2}  = \|u_n - u\|_{X^{\alpha, \lambda}}^{2} + \|u\|_{X^{\alpha, \lambda}}^{2} + o(1).
\end{aligned}
$$
Therefore, to obtain (\ref{mt07}) and (\ref{mt08}) it suffices to check that
\begin{equation}\label{mt09}
\int_{\mathbb{R}} [W(t,u_n) - W(t, u_n-u) - W(t,u)]dt = o(1)
\end{equation}
and
\begin{equation}\label{mt10}
\sup_{\varphi \in X^{\alpha, \lambda}, \|\varphi\|_{\alpha, \lambda} =1} \int_{\mathbb{R}} \langle \nabla W(t, u_n) - \nabla W(t, u_n-u)- \nabla W(t,u), \varphi\rangle dt = o(1).
\end{equation}
Here, we only prove (\ref{mt10}), the verification of (\ref{mt09}) is similar. In fact, let
\begin{equation}\label{mt11}
\mathcal{A}:= \lim_{n\to \infty} \sup_{\varphi \in X^{\alpha, \lambda}, \|\varphi\|_{\alpha, \lambda} =1} \int_{\mathbb{R}} \langle \nabla W(t, u_n) - \nabla W(t, u_n-u)- \nabla W(t,u), \varphi\rangle dt.
\end{equation}
If $\mathcal{A}>0$, then, there exists $\varphi_0 \in X^{\alpha, \lambda}$ with $\|\varphi_0\|_{X^{\alpha, \lambda}} = 1$ such that
$$
\left| \int_{\mathbb{R}} \langle \nabla W(t,u_n) - \nabla W(t, u_n-u) - \nabla W(t,u), \varphi_0\rangle dt \right| \geq \frac{\mathcal{A}}{2}
$$
for $n$ large enough. Now, from (\ref{mt04}) and Young's inequality, there exist $C_1$, $C_2$  and $C_3 >0$ such that
$$
\begin{aligned}
&|\langle \nabla W(t, u_n) - \nabla W(t, u_n-u), \varphi_0\rangle|
%& \leq \epsilon (|u_n| + |u_n-u|)|\varphi_0| + C_\epsilon (|u_n|^{p-1} + |u_n-u|^{p-1})|\varphi_0|\\
\leq C_1 \left( \epsilon |u|^2 + \epsilon |u_n-u|^2 + \epsilon |\varphi_0|^2 + C_2|u|^p + \epsilon |u_n - u|^{p} + C_3|\varphi_0|^p\right).
\end{aligned}
$$
Hence, there exists $C_4, C_5, C_6 >0$ such that
$$
\begin{aligned}
|\langle \nabla W(t, u_n) - \nabla W(t, u_n - u) &- \nabla W(t,u), \varphi_0\rangle|\\
& \leq C_4 \left( \epsilon |u|^2 + \epsilon |u_n - u|^2 + \epsilon |\varphi_0|^2 + C_5|u|^p + \epsilon |u_n - u|^p + C_6|\varphi_0|^p\right).
\end{aligned}
$$
Let
$$
h_n(t) = \max\{|\langle \nabla W(t, u_n) - \nabla W(t, u_n - u) - \nabla W(t,u), \varphi_0 \rangle| - C_4 \epsilon (|u_n - u|^2 + |u_n - u|^p), 0\}.
$$
So
$$
0\leq h_n(t) \leq C_4 (\epsilon |u|^2 + \epsilon |\varphi_0|^2 + C_5|u|^p + C_6|\varphi_0|^p).
$$
By the Lebesgue dominated convergence Theorem and the fact $u_n \to u$ a.e. in $\mathbb{R}$, , we can get
$$
\int_{\mathbb{R}} h_n(t)dt \to 0\;\;\mbox{as}\;\;n \to \infty.
$$
From where
$$
\int_{\mathbb{R}} |\langle \nabla W(t, u_n(t)) - \nabla W(t, u_n(t) - u(t)) - \nabla W(t,u(t)), \varphi_0(t) \rangle|dt \to 0\;\;\mbox{as}\;\; n\to \infty,
$$
which is a contradiction. Hence $\mathcal{A} = 0$.

Furthermore, if $I_\lambda (u_n) \to c$ and $I'_\lambda (u_n) \to 0$ as $n \to \infty$, by (\ref{mt07}) and (\ref{mt08}), we get
$$
I_\lambda (u_n - u) \to c-I_\lambda (u) + o(1)
$$
and
$$
I'_\lambda (u_n - u) = -I'_\lambda(u)\;\;\mbox{as}\;\;n \to +\infty.
$$
Now, for every $\varphi \in C_{0}^{\infty}(\mathbb{R}, \mathbb{R}^n)$ we have
$$
I'_\lambda(u)\varphi = \lim_{n \to \infty} I'_\lambda (u_n) \varphi = 0.
$$
Consequently, $I'_\lambda (u) = 0$.
\end{proof}

\begin{lemma}\label{cerami1}
Suppose that $(\mathcal{L})_1 - (\mathcal{L})_3$, $(W_1) - (W_4)$ be satisfied and let $c\in \mathbb{R}$. Then each $(Ce)_c$-sequence of $I_\lambda$ is bounded in $X^{\alpha, \lambda}$.
\end{lemma}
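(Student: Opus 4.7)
The strategy is to argue by contradiction. Suppose $(u_n)\subset X^{\alpha,\lambda}$ is a $(Ce)_c$-sequence with $\|u_n\|_{X^{\alpha,\lambda}}\to\infty$, and set $v_n := u_n/\|u_n\|_{X^{\alpha,\lambda}}$, so that $\|v_n\|_{X^{\alpha,\lambda}}=1$. Passing to a subsequence, $v_n\rightharpoonup v$ weakly in $X^{\alpha,\lambda}$, $v_n\to v$ almost everywhere on $\mathbb{R}$, and $v_n\to v$ in $L^q_{\mathrm{loc}}(\mathbb{R})$ for every $q\in[1,\infty)$, by combining Lemma \ref{Lem:XcontH} with the local compactness of $H^\alpha$. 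From the Cerami property $(1+\|u_n\|_{X^{\alpha,\lambda}})\|I'_\lambda(u_n)\|\to 0$ one has $I'_\lambda(u_n)u_n=o(1)$, so
\begin{equation*}
\int_{\mathbb{R}} H(t,u_n)\,dt \;=\; I_\lambda(u_n)-\tfrac{1}{2}\,I'_\lambda(u_n)u_n \;\longrightarrow\; c,
\end{equation*}
and in particular $\int H(t,u_n)\,dt$ is uniformly bounded. Dividing $I_\lambda(u_n)\to c$ by $\|u_n\|_{X^{\alpha,\lambda}}^{2}$ also yields $\int W(t,u_n)/\|u_n\|_{X^{\alpha,\lambda}}^{2}\,dt\to 1/2$.

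\emph{Case 1: $v\not\equiv 0$.} On $\Omega:=\{t:v(t)\neq 0\}$, which has positive measure, $|u_n(t)|=\|u_n\|_{X^{\alpha,\lambda}}|v_n(t)|\to\infty$ almost everywhere, so by $(W_3)$, $W(t,u_n)/|u_n|^2\to +\infty$ a.e.\ on $\Omega$. Using $(W_2)$ and Fatou's lemma,
\begin{equation*}
\liminf_n\int_{\mathbb{R}}\frac{W(t,u_n)}{\|u_n\|_{X^{\alpha,\lambda}}^{2}}\,dt \;\geq\; \liminf_n\int_{\Omega}\frac{W(t,u_n)}{|u_n|^2}|v_n|^2\,dt \;=\; +\infty,
\end{equation*}
which contradicts the previous display. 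Hence we may assume $v\equiv 0$.

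\emph{Case 2: $v\equiv 0$.} I adapt the Jeanjean--Miyagaki--Souto cut-off. For each $M>0$ set $\sigma_n:=\sqrt{2M}/\|u_n\|_{X^{\alpha,\lambda}}$, which lies in $(0,1)$ for $n$ large, and let $t_n\in[0,1]$ maximize $s\mapsto I_\lambda(su_n)$. Then
\begin{equation*}
I_\lambda(t_n u_n) \;\geq\; I_\lambda(\sigma_n u_n) \;=\; M-\int_{\mathbb{R}} W(t,\sqrt{2M}\,v_n)\,dt.
\end{equation*}
The intermediate claim is $\int W(t,\sqrt{2M}\,v_n)\,dt\to 0$. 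To see this, let $c_0>0$ denote the constant in $(\mathcal{L})_1$ and split $\mathbb{R}=\{l<c_0\}\cup\{l\geq c_0\}$. On $\{l\geq c_0\}$ one has $\lambda c_0\int_{\{l\geq c_0\}}|v_n|^2\,dt\leq\|v_n\|_{X^{\alpha,\lambda}}^{2}=1$, controlling the $L^2$-norm by $1/(\lambda c_0)$; on the finite-measure set $\{l<c_0\}$, Vitali's convergence theorem combined with $v_n\to 0$ a.e.\ and the uniform bound $\|v_n\|_\infty\leq C_\infty$ yields $\int_{\{l<c_0\}}|v_n|^2\,dt\to 0$. Interpolating with the $L^\infty$-bound (or with (\ref{15})) one deduces $v_n\to 0$ in $L^q(\mathbb{R})$ for every $q\in[2,\infty)$, up to an error vanishing as $\lambda\to\infty$, and then (\ref{mt05}) settles the claim. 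Thus $I_\lambda(t_nu_n)\to\infty$; since $I_\lambda(u_n)\to c<\infty$ and $I_\lambda(0)=0$, $t_n\in(0,1)$ eventually, so $\frac{d}{ds}I_\lambda(su_n)|_{s=t_n}=0$ gives $I'_\lambda(t_nu_n)u_n=0$ and
\begin{equation*}
\int_{\mathbb{R}} H(t,t_n u_n)\,dt \;=\; I_\lambda(t_n u_n)\;\longrightarrow\;\infty.
\end{equation*}

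The main anticipated obstacle is closing this divergence into a contradiction with the uniform bound on $\int H(t,u_n)\,dt$. Assumption $(W_4)$ is designed precisely for this: applying Hölder's inequality with exponents $\sigma$ and $\sigma/(\sigma-1)$ to the identity $\int\langle\nabla W(t,t_nu_n),u_n\rangle\,dt = t_n\|u_n\|_{X^{\alpha,\lambda}}^{2}$ (from $I'_\lambda(t_nu_n)u_n=0$), invoking $(W_4)$ on $\{|t_nu_n|\geq R\}$ together with (\ref{mt04}) on the complement, and exploiting $p=2\sigma/(\sigma-1)$ from Remark \ref{ineq}, one arrives at an inequality of the form
\begin{equation*}
1 \;\leq\; \frac{\epsilon+C_\epsilon R^{p-2}}{\Theta} + \bigl(C_0\,I_\lambda(t_nu_n)\bigr)^{1/\sigma}\|v_n\|_{L^p}^{2};
\end{equation*}
balancing the divergence of $I_\lambda(t_nu_n)$ against the $L^p$-decay of $v_n$ established above, and choosing $\epsilon$ small and $\lambda$ large so that the fixed part of the right-hand side is strictly less than $1$, produces the desired contradiction and completes the proof.
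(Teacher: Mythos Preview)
Your Case~1 is fine, but Case~2 does not close. There are two linked gaps.

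\textbf{First}, for a \emph{fixed} $\lambda$ you never establish $v_n\to 0$ in $L^q(\mathbb{R})$. On $\{l\geq c_0\}$ you only obtain the static bound $\int_{\{l\geq c_0\}}|v_n|^2\,dt\leq 1/(\lambda c_0)$, which does not tend to $0$ as $n\to\infty$. Consequently $\int_{\mathbb{R}}W(t,\sqrt{2M}\,v_n)\,dt$ is not shown to vanish; through \eqref{mt05} it is bounded above by $\epsilon M\|v_n\|_{L^2}^2+C_\epsilon(2M)^{p/2}p^{-1}\|v_n\|_{L^p}^p$, and the second term is of order $M^{p/2}/\lambda$ with $p>2$, so $M-\int W(t,\sqrt{2M}v_n)\,dt$ need not go to $+\infty$ as $M\to\infty$. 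The Jeanjean step therefore fails to yield $I_\lambda(t_nu_n)\to\infty$ for fixed $\lambda$.

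\textbf{Second}, even if one grants $I_\lambda(t_nu_n)\to\infty$, your final displayed inequality points the wrong way: the term $(C_0I_\lambda(t_nu_n))^{1/\sigma}\|v_n\|_{L^p}^2$ \emph{diverges} (since $\|v_n\|_{L^p}$ is merely bounded), so $1\leq\cdots$ is not a contradiction. ``Balancing'' would require $\|v_n\|_{L^p}^{2\sigma}\,I_\lambda(t_nu_n)\to 0$, for which you have no mechanism. And invoking ``$\lambda$ large'' is not permitted: the lemma is stated for every $\lambda$, and in the paper it is proved \emph{before} any threshold $\Lambda_*$ is introduced (the $\lambda$--dependence enters only in Lemma~\ref{cerami2}).

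The paper bypasses the vanishing dichotomy altogether. It works directly with the bounded quantity $\int_{\mathbb{R}}H(t,u_n)\,dt=c+o(1)$ and decomposes according to the \emph{size of $|u_n|$}: $\Omega_n(0,\delta)$, $\Omega_n(\delta,R)$, $\Omega_n(R,\infty)$. On $\Omega_n(0,\delta)$, $(W_1)$ gives the contribution $<\epsilon$. On $\Omega_n(R,\infty)$, the bound $\int H(t,u_n)\leq c+o(1)$ together with $h(r):=\inf_{|u|\geq r}H(t,u)\to\infty$ (a consequence of $(W_3),(W_4)$) controls $\operatorname{meas}\Omega_n(R,\infty)$ and hence $\int_{\Omega_n(R,\infty)}|v_n|^p$, so that H\"older plus $(W_4)$ make this piece $<\epsilon$. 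On $\Omega_n(\delta,R)$, the same bounded $\int H$ and $C_\delta^R:=\inf_{\delta\leq|u|<R}H(t,u)/|u|^2>0$ force $\int_{\Omega_n(\delta,R)}|v_n|^2\to 0$. Summing gives $\int\frac{|\nabla W(t,u_n)|}{|u_n|}|v_n|^2<3\epsilon<1$, contradicting $\int\frac{\langle\nabla W(t,u_n),u_n\rangle}{\|u_n\|^2}\to 1$. This level--set argument is the idea your attempt is missing.
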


\begin{proof}
Suppose that $\{u_n\} \subset X^{\alpha, \lambda}$ is a $(Ce)_c$ sequence for $c>0$, namely
\begin{equation}\label{mt12}
I_\lambda (u_n) \to c,\quad (1+\|u_n\|_{X^{\alpha, \lambda}})I'_\lambda (u_n) \to 0\;\;\mbox{as}\;\;n\to \infty.
\end{equation}
Therefore
\begin{equation}\label{mt13}
c- o_n(1) = I_\lambda (u_n) - \frac{1}{2}I'_\lambda(u_n)u_n = \int_{\mathbb{R}} H(t, u_n(t))dt.
\end{equation}
By contradiction, suppose that there is a subsequence, again denoted by $\{u_n\}$, such that $\|u_n\|_{X^{\alpha, \lambda}} \to +\infty$ as $n\to +\infty$. Taking $v_n = \frac{u_n}{\|u_n\|_{X^{\alpha, \lambda}}}$, we get that $\{v_n\}$ is bounded in $X^{\alpha, \lambda}$ and $\|v_n\|_{X^{\alpha, \lambda}} = 1$. Moreover, we have
$$
o(1) = \frac{\langle I'_\lambda (u_n), u_n\rangle}{\|u_n\|_{X^{\alpha, \lambda}}^{2}} = 1- \int_{\mathbb{R}} \frac{\langle \nabla W(t,u_n), u_n \rangle}{\|u_n\|_{X^{\alpha, \lambda}}^{2}},
$$
as $n \to \infty$, which implies 
\begin{equation}\label{lim}
\int_{\mathbb{R}} \frac{\langle \nabla W(t, u_n), v_n\rangle}{|u_n|} |v_n|dt = \int_{\mathbb{R}} \frac{\langle \nabla W(t, u_n), u_n\rangle}{\|u_n\|_{X^{\alpha, \lambda}}^{2}} \to 1.
\end{equation}
For $r\geq 0$, let
$$
h( r ) := \inf\{H(t, u):\;\;t\in \mathbb{R}, \;\;|u|\geq r\}.
$$
From ($W_2$) we have $h ( r) >0$ for all $r>0$. Furthermore, by ($W_2$) and $(W_4)$, for $|u|\geq r$,
\begin{equation}\label{mt14}
C_0 H(t,u) \geq \frac{|\nabla W(t,u)|^\sigma}{|u|^\sigma} = \left( \frac{|\nabla W(t,u)||u|}{|u|^2} \right)^{\sigma} \geq \left( \frac{\langle \nabla W(t,u), u\rangle}{|u|^2}\right)^{\sigma} \geq \left( \frac{2W(t,u)}{|u|^2} \right)^{\sigma},
\end{equation}
it follows from $(W_3)$ and the definition of $h(r)$ that
$$
h( r) \to \infty\;\;\mbox{as}\;\;r\to \infty.
$$
For $0\leq a < b$, let
$$
\Omega_{n}(a, b) :=\{t\in \mathbb{R}:\;\;a \leq |u_n(t)|< b\}
$$
and
$$
C_{a}^{b} : = \inf \left\{\frac{H(t, u)}{|u|^2}: \;\;t\in \mathbb{R}\;\;\mbox{and}\;\;u\in \mathbb{R}^N\;\;\mbox{with}\;\;a \leq |u|< b \right\}.
$$
By ($W_1$), for any $\epsilon >0$, there is $\delta >0$ such that
$$
|\nabla W(t,u)| \leq \frac{\epsilon}{\mathcal{K}_{2}^{2}}|u|\;\;\mbox{for all}\;\;|t|\leq \delta.
$$
Consequently
\begin{equation}\label{mt15}
\begin{aligned}
\int_{\Omega_{n}(0, \delta)} \frac{|\nabla W(t, u_n)|}{|u_n|}|v_n|^2dt \leq \int_{\Omega_n(0, \delta)} \frac{\epsilon}{\mathcal{K}_{2}^{2}}|v_n|^2dt\leq \frac{\epsilon}{\mathcal{K}_{2}^{2}}\|v_n\|_{L^2}^{2} \leq \epsilon,\;\;\forall n.
\end{aligned}
\end{equation}
We note that
$$
H(t,u_n)\geq C_{a}^{b}|u_n|^2\quad \mbox{for all} \quad t \in \Omega_n (a,b),
$$
consequently, by (\ref{mt13}) we get
\begin{equation}\label{mt16}
\begin{aligned}
c-o_n(1) &= \int_{\Omega_n(0,a)} H(t,u_n)dt + \int_{\Omega_n(a,b)}H(t,u_n)dt + \int_{\Omega_n(b, +\infty)}H(t,u_n)dt\\
&\geq \int_{\Omega_n (0,a)} H(t,u_n)dt + C_{a}^{b}\int_{\Omega_n(a,b)}|u_n|^2dt + \int_{\Omega_n(b, +\infty)}H(t,u_n)dt\\
&= \int_{\Omega_n (0,a)} H(t,u_n)dt + C_{a}^{b}\int_{\Omega_n(a,b)}|u_n|^2dt + h(b)meas(\Omega_n(b,+\infty)).
\end{aligned}
\end{equation}
Since $h( r) \to +\infty$ as $r\to +\infty$, for $p< q < \infty$ it follows from (\ref{mt16}) that
\begin{equation}\label{mt17}
\begin{aligned}
\int_{\Omega_{n}(b, +\infty)}|v_n|^pdt &\leq \left( \int_{\Omega_n(b, +\infty)}|v_n|^qdt \right)^{\frac{p}{q}} meas(\Omega_n(b+\infty))^{\frac{q-p}{q}}\\
&\leq \|v_n\|_{L^q}^{p} \left( \frac{c-o_n(1)}{h(b)} \right)^{\frac{q-p}{p}}\leq \mathcal{K}_{q}^{p}\left( \frac{c-o_n(1)}{h(b)} \right)^{\frac{q-p}{p}} \to 0
\end{aligned}
\end{equation}
as $b\to +\infty$, where $p=\frac{2\sigma}{\sigma -1}>2$. Furthermore, by ($W_4$) and the H\"older inequality, we can choose $R>0$ large enough such that
\begin{equation}\label{mt18}
\begin{aligned}
\left|\int_{\Omega_n(R, +\infty)} \frac{\langle \nabla W(t, u_n), u_n\rangle}{\|u_n\|_{X^{\alpha, \lambda}}^{2}} dt \right|& \leq \int_{\Omega_n(R,+\infty)} \frac{|\nabla W(t,u_n)|}{|u_n|}|v_n|^2dt\\
&\leq \left( \int_{\Omega_n(R, +\infty)} \frac{|\nabla W(t,u_n)|^\sigma}{|u_n|^\sigma} \right)^{1/\sigma} \left( \int_{\Omega_n(R,+\infty)} |v_n|^pdt \right)^{\frac{\sigma-1}{\sigma}}\\
&\leq \left(\int_{\Omega_n(R, +\infty)}C_0H(t,u_n)dt  \right)^{1/\sigma} \left( \int_{\Omega_n(R, +\infty)} |v_n|^pdt \right)^{\frac{\sigma -1}{\sigma}}\\
&\leq C_0^{1/\sigma} (c-o_n(1))^{1/\sigma} \left( \int_{\Omega_n(R, +\infty)}|v_n|^pdt \right)^{\frac{\sigma -1}{\sigma}}\\&< \epsilon.
\end{aligned}
\end{equation}
Now, by using (\ref{mt16}) again, we get
$$
\int_{\Omega_n(\delta, R)} |v_n|^2dt = \frac{1}{\|u_n\|_{X^{\alpha, \lambda}}^{2}} \int_{\Omega_n(\delta, R)}|u_n|^2dt \leq \frac{c-o_n(1)}{C_{\delta}^{R}\|u_n\|_{X^{\alpha, \lambda}}^{2}} \to 0
$$
as $n\to \infty$. Then, for $n$ large enough, by the continuity of $\nabla W$ one has
\begin{equation}\label{mt19}
\int_{\Omega_n(\delta, R)} \frac{|\nabla W(t, u_n)|}{|u_n|}|v_n|^2dt \leq K \int_{\Omega_n(\delta, R)} |v_n|^2dt < \epsilon.
\end{equation}
Hence, by (\ref{mt15}), (\ref{mt18}) and (\ref{mt19}) we have
$$
\int_{\mathbb{R}} \frac{\langle \nabla W(t,u_n), v_n \rangle}{|u_n|}|v_n|dt \leq \int_{\mathbb{R}} \frac{|\nabla W(t, u_n)|}{|u_n|}|v_n|^2dt \leq 3 \epsilon<1,
$$
for $n$ large enough, a contradiction with (\ref{lim}) and then $\{u_n\}$ is bounded in $X^{\alpha, \lambda}$.
\end{proof}

\begin{lemma}\label{cerami2}
Suppose that $(\mathcal{L})_1-(\mathcal{L})_3$, $(W_1) - (W_4)$ be satisfied. Then, for any $\mathfrak{C}>0$, there exists $\Lambda_1 = \Lambda(\mathfrak{C})>0$ such that $I_\lambda$ satisfies $(Ce)_c$ condition for all $c \leq \mathfrak{C}$ and $\lambda > \Lambda_1$.
\end{lemma}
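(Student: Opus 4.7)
Start with a $(Ce)_c$-sequence $\{u_n\} \subset X^{\alpha,\lambda}$ at level $c \leq \mathfrak{C}$. By Lemma \ref{cerami1}, the sequence is bounded, and a careful tracking of the constants in that proof shows $\|u_n\|_{X^{\alpha,\lambda}} \leq M = M(\mathfrak{C})$ uniformly once $\lambda$ exceeds the Remark \ref{keynta} threshold $\lambda_0 = 1/(c C_\infty^2\, meas\{l<c\})$. Passing to a subsequence, $u_n \rightharpoonup u$ in $X^{\alpha,\lambda}$, and Lemma \ref{PS1} yields $I'_\lambda(u) = 0$.

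The goal is then to promote this weak convergence to strong. Using the Hilbert structure one writes
\begin{equation*}
\|u_n - u\|_{X^{\alpha,\lambda}}^2 = \bigl(I'_\lambda(u_n) - I'_\lambda(u)\bigr)(u_n - u) + \int_{\mathbb{R}} \bigl\langle \nabla W(t, u_n) - \nabla W(t, u),\, u_n - u\bigr\rangle\,dt,
\end{equation*}
and the first term on the right is $o(1)$ by the Cerami property of $\{u_n\}$, boundedness of $\{u_n - u\}$, and $I'_\lambda(u) = 0$. It therefore suffices to control the nonlinear integral.

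I would split the line as $\mathbb{R} = [-R,R] \cup \{|t|>R\}$. On $[-R,R]$ the compact embedding $H^\alpha([-R,R]) \hookrightarrow L^q([-R,R])$ for every $q < \infty$ (valid since $\alpha > 1/2$, in the spirit of Proposition \ref{FC-FEprop4}) combined with the growth bound (\ref{mt04}) and H\"older's inequality give that the $[-R,R]$-piece tends to $0$ as $n \to \infty$ for each fixed $R$. For the tail I would decompose further as $\{|t|>R\} = (\{|t|>R\} \cap \{l \geq c\}) \cup (\{|t|>R\} \cap \{l < c\})$. On the coercive part,
\begin{equation*}
\int_{\{|t|>R\} \cap \{l \geq c\}} |u_n - u|^2\,dt \leq \frac{1}{\lambda c}\int_{\mathbb{R}} \lambda\bigl(L(t)(u_n - u), u_n - u\bigr)\,dt \leq \frac{\|u_n - u\|_{X^{\alpha,\lambda}}^2}{\lambda c} \leq \frac{4M^2}{\lambda c},
\end{equation*}
while on the finite-measure part, $(\mathcal{L})_1$ together with the uniform Sobolev bound $\|u_n - u\|_\infty \leq C\|u_n - u\|_{X^{\alpha,\lambda}}$ from Lemma \ref{Lem:LinftyContH} gives
\begin{equation*}
\int_{\{|t|>R\} \cap \{l<c\}} |u_n - u|^2\,dt \leq C\, meas\bigl(\{|t|>R\} \cap \{l<c\}\bigr) \longrightarrow 0
\end{equation*}
as $R \to \infty$, uniformly in $n$. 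These $L^2$-tail bounds upgrade to $L^p$-tail bounds via the interpolation $\|u_n - u\|_{L^p(E)}^p \leq \|u_n - u\|_\infty^{p-2}\|u_n - u\|_{L^2(E)}^2$, and plugging back into (\ref{mt04}) controls the full nonlinear tail.

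The main obstacle I foresee is the delicate orchestration of the four free parameters $\epsilon$, $R$, $n$, $\lambda$: the parameter $\epsilon$ in (\ref{mt04}) must be chosen small enough to tame the quadratic piece but at the cost of blowing up $C_\epsilon$ in the $p$-superlinear piece; $R$ must then be taken large enough that $meas(\{|t|>R\} \cap \{l<c\})$ absorbs that inflated constant; $n$ must be large for the interior compactness to take effect; and finally the threshold $\Lambda_1 = \Lambda_1(\mathfrak{C})$ must be selected so that $4M(\mathfrak{C})^2/(\lambda c)$ dominates every other emergent quantity and closes the estimate into $\|u_n - u\|_{X^{\alpha,\lambda}}^2 \to 0$. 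This is precisely where the large-$\lambda$ hypothesis is essential: the $\lambda$-weighted coercivity of $L(t)$ on $\{l \geq c\}$ is what restores, in the limit, the global compactness lost when working on the whole real line, while the finite measure of $\{l<c\}$ combined with local compact embedding handles the remainder.
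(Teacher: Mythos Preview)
Your approach is sound but takes a genuinely different route from the paper. The paper does not work with the identity $\|u_n-u\|^2=(I'_\lambda(u_n)-I'_\lambda(u))(u_n-u)+\int\langle\nabla W(t,u_n)-\nabla W(t,u),u_n-u\rangle\,dt$ at all. Instead it invokes the Brezis--Lieb type splitting of Lemma~\ref{PS1} to pass to $w_n=u_n-u$, which now satisfies $I'_\lambda(w_n)\to 0$ and, crucially, $\int_{\mathbb R}H(t,w_n)\,dt\le\mathfrak C+o_n(1)$; it then expands $\langle I'_\lambda(w_n),w_n\rangle=o_n(1)$ and splits the nonlinear term $\int\langle\nabla W(t,w_n),w_n\rangle\,dt$ according to the \emph{values} $\{|w_n|\le R\}$ versus $\{|w_n|>R\}$ (with $R$ the constant in $(W_4)$), using $(W_4)$ and H\"older directly on the large-value set and the global $L^2$ bound $\|w_n\|_{L^2}^2\le\frac{1}{\lambda c}\|w_n\|_{X^{\alpha,\lambda}}^2+o_n(1)$ on the small-value set. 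This produces a purely relative estimate $\int\langle\nabla W(t,w_n),w_n\rangle\,dt\le\kappa(\lambda,\mathfrak C)\|w_n\|_{X^{\alpha,\lambda}}^2+o_n(1)$ with $\kappa\to 0$ as $\lambda\to\infty$, so $\Lambda_1$ depends only on $\mathfrak C$ through $(C_0\mathfrak C)^{1/\sigma}$ and never on any a priori bound $M$ for the sequence. Your domain split $[-R,R]\cup\{|t|>R\}$ plus the further decomposition $\{l\ge c\}\cup\{l<c\}$ also works, but the closing step hinges on the uniform-in-$\lambda$ bound $\|u_n\|_{X^{\alpha,\lambda}}\le M(\mathfrak C)$ you extract from Lemma~\ref{cerami1}: your $\Lambda_1$ then has to absorb $C_\epsilon\,M(\mathfrak C)^{p-2}$-type constants, and you must arrange the tail contribution as a coefficient times $\|u_n-u\|_{X^{\alpha,\lambda}}^2$ (not an absolute constant) so it can be absorbed into the left side---otherwise you only get $\limsup_n\|u_n-u\|^2\le C/(\lambda c)$ rather than $0$. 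What you gain is that your argument is more elementary (no Brezis--Lieb lemma needed for the compactness step, and $(W_4)$ is only used indirectly through~(\ref{mt04})); what the paper gains is a cleaner and more self-contained dependence of $\Lambda_1$ on $\mathfrak C$, with no detour through a quantitative rereading of the boundedness lemma.
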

\begin{proof}
For any $\mathfrak{C}>0$, suppose that $\{u_n\} \subset X^{\alpha, \lambda}$ is a $(Ce)_c$ sequence for $c\leq \mathfrak{C}$, namely
$$
I_\lambda (u_n) \to c,\quad (1+\|u_n\|_{X^{\alpha, \lambda}})I'_\lambda (u_n) \to 0\;\;\mbox{as}\;\;n\to \infty.
$$
By Lemma \ref{cerami1}, $\{u_n\}$ is bounded. Therefore, there exists $u\in X^{\alpha, \lambda}$ such that $u_n \rightharpoonup u $ in $X^{\alpha, \lambda}$ and $u_n \to u$ a.e. in $\mathbb{R}$.

Let $w_n:= u_n-u$. %Now we show that $w_n \to 0$ in $X^{\alpha, \lambda}$ for $\lambda>0$ large. In fact, 
By Lemma \ref{PS1}  we get
$$
I'_\lambda (u) = 0,\quad I_\lambda (w_n) \to c-I_\lambda (u) \quad \mbox{and}\quad I'_\lambda (w_n) \to 0\;\;\mbox{as}\;\;n\to \infty.
$$
Next
\begin{equation}\label{mt21}
I_\lambda (u) = I_\lambda (u) - \frac{1}{2}I'_\lambda (u)u = \int_{\mathbb{R}} H(t,u)dt \geq 0,
\end{equation}
and
\begin{equation}\label{mt22}
\int_{\mathbb{R}}H(t,w_n)dt  \to c- I_\lambda (u).
\end{equation}
Therefore, for $c \leq \mathfrak{C}$, we get
\begin{equation}\label{mt23}
\int_{\mathbb{R}} H(t, w_n)dt \leq \mathfrak{C} + o_n(1).
\end{equation}
On the other hand, by $(\mathcal{L})_1$ and since $w_n \to 0$ in $L^2_{loc}(\mathbb{R}, \mathbb{R}^N)$, we have
\begin{equation}\label{mt24}
\begin{aligned}
\|w_n\|_{L^2}^{2} %& = \int_{\{l\geq c\}} |w_n|^2dt + \int_{\{l< c\}}|w_n|^2dt\\
\leq \frac{1}{\lambda c}\int_{\{l\geq c\}}\lambda \langle L(t)w_n,w_n \rangle dt + o_n(1)\leq \frac{1}{\lambda c} \|w_n\|_{X^{\alpha, \lambda}}^{2} + o_n(1).
\end{aligned}
\end{equation}
Let $p < q < \infty$, where $p = \frac{2\sigma}{\sigma -1}$. Using Remark \ref{keynta} and H\"older inequality we obtain
\begin{equation}\label{mt25}
\begin{aligned}
\int_{\mathbb{R}}|w_n|^pdt %&= \int_{\mathbb{R}}|w_n|^{p- \frac{q(p-2)}{q-2}}|w_n|^{\frac{q(p-2)}{q-2}}dt\\
&= \int_{\mathbb{R}} |w_n|^{\frac{2(q-p)}{q-2}}|w_n|^{\frac{q(p-2)}{q-2}}dt \leq \|w_n\|_{L^2}^{\frac{2(q-p)}{q-2}}\|w_n\|_{L^q}^{\frac{q(p-2)}{q-2}}\\
&\leq \mathcal{K}_{q}^{\frac{q(p-2)}{q-2}} \left( \frac{1}{\lambda c} \right)^{\frac{q-p}{q-2}}\|w_n\|_{X^{\alpha, \lambda}}^{p} + o_n(1).
\end{aligned}
\end{equation}
Furthermore, for $|u| \leq R$ (where $R$ is defined in (W$_4$)), from (\ref{mt04}), we get
$$
|\nabla W(t, u)| \leq (\epsilon + C_\epsilon R^{p-2})|u| = \tilde{C}|u|.
$$
It follows from (\ref{mt24}) that
$$
\begin{aligned}
\int_{\{t\in \mathbb{R}:\;\;|w_n(t)|\leq R\}} \langle \nabla W(t,w_n), w_n\rangle dt & \leq \int_{\{t\in \mathbb{R}:\;\;|w_n(t)|\leq R\}}|\nabla W(t, w_n)||w_n|dt  \\
&\leq \tilde{C}\int_{\{t\in \mathbb{R}:\;\;|w_n(t)|\leq R\}}|w_n|^2dt\leq \frac{\tilde{C}}{\lambda c}\|w_n\|_{X^{\alpha, \lambda}}^{2} + o_n(1).
\end{aligned}
$$
On the other hand, from (\ref{mt25}) and the H\"older inequality we obtain
$$
\begin{aligned}
\int_{\{t\in \mathbb{R}:\;\;|w_n(t)|> R\}} \langle \nabla W(t,w_n), w_n\rangle dt &\leq \int_{\{t\in \mathbb{R}:\;\;|w_n(t)|> R\}}|\nabla W(t,w_n)||w_n|dt \\
&\leq \int_{\{t\in \mathbb{R}:\;\;|w_n(t)|> R\}} \frac{|\nabla W(t, w_n)|}{|w_n|}|w_n|^2dt\\
&\leq \left( \int_{\{t\in \mathbb{R}:\;\;|w_n(t)|> R\}} \frac{|\nabla W(t, w_n)|^\sigma}{|w_n|^\sigma}dt\right)^{1/\sigma} \left( \int_{\{t\in \mathbb{R}:\;\;|w_n(t)|> A\}} |w_n|^p \right)^{\frac{2}{p}}\\
&\leq \left( C_0\int_{\mathbb{R}} H(t, w_n)dt \right)^{1/\sigma} \|w_n\|_{p}^{2}\\
&\leq (C_0 \mathfrak{C})^{1/\sigma} \mathcal{K}_{q}^{\frac{2q(p-2)}{p(q-2)}}\left( \frac{1}{\lambda c} \right)^{\frac{2(q-p)}{p(q-2)}} \|w_n\|_{X^{\alpha, \lambda}}^{2} + o_n(1).
\end{aligned}
$$
Therefore
$$
\begin{aligned}
o_n(1) & = \langle I'_\lambda (w_n), w_n\rangle = \|w_n\|_{X^{\alpha, \lambda}}^{2} - \int_{\mathbb{R}}\langle \nabla W(t,w_n), w_n\rangle dt\\
&= \|w_n\|_{X^{\alpha, \lambda}}^{2} - \int_{\{t\in \mathbb{R}:\;\;|w_n(t)|\leq R\}} \langle \nabla W(t,w_n), w_n\rangle dt - \int_{\{t\in \mathbb{R}:\;\;|w_n(t)|>R\}} \langle \nabla W(t,w_n), w_n\rangle dt\\
&\geq \left( 1 - \frac{\tilde{C}}{\lambda c}- C^{*} \left( \frac{1}{\lambda c} \right)^{\frac{2(q-p)}{p(q-2)}} \right) \|w_n\|_{X^{\alpha, \lambda}}^{2}+ o_n(1),
\end{aligned}
$$
where $C^* = (C_0 \mathfrak{C})^{1/\sigma} \mathcal{K}_{q}^{\frac{2q(q-p)}{p(q-2)}}$. Now, we choose $\Lambda_1 = \Lambda (\mathfrak{C}) >0$ large enough such that
 $$
 1 - \frac{\tilde{C}}{\lambda c}- C^{*} \left( \frac{1}{\lambda c} \right)^{\frac{2(q-p)}{p(q-2)}} >0\quad \mbox{for all} \;\;\lambda > \Lambda_1.
 $$
Then $w_n \to 0$ in $X^{\alpha, \lambda}$ for all $\lambda > \Lambda_1$.
\end{proof}

\noindent
{\bf Proof of Theorem \ref{Thm:MainTheorem1}}
By Lemmas \ref{GC1}, $I_\lambda$ has the mountain pass geometry and by Lemma \ref{cerami2}, $I_\lambda$ satisfies the $(Ce)_c$-condition. Therefore,
by using mountain pass lemma with Cerami condition \cite{Ekeland}, for any $c_\lambda>0$ defined as follows
$$
c_\lambda=\inf_{g\in \Gamma}\max_{s\in [0,1]}I_\lambda (g(s)),
$$
where
$$
\Gamma=\{g\in C([0,1],X^{\alpha,\lambda})\,|\, g(0)=0, g(1)=e\},
$$
($e$ is defined in Lemma \ref{GC1}-ii), there exists $u_\lambda\in X^{\alpha,\lambda}$ such that
\begin{equation}\label{eqn:ulambda}
I_\lambda(u_\lambda)=c_\lambda\quad\mbox{and}\quad I'_\lambda(u_\lambda)=0.
\end{equation}
That is, (FHS)$_\lambda$ has at least one nontrivial solution for $\lambda>\Lambda(c_\lambda)$ (defined in Lemma \ref{cerami2}).

\section{Concentration phenomena}

In this section, we study the concentration of solutions for problem $(\mbox{FHS})_{\lambda}$ as $\lambda \to \infty$. That is, we focus our attention on the proof
of Theorem \ref{Thm:MainTheorem2}.
\begin{remark}\label{ccnota}
The main difficulty to proof Theorem \ref{Thm:MainTheorem2}, is to show that $c_\lambda$ is bounded form above independent of $\lambda$. Thank to the proof of Lemma \ref{GC1}-ii, we can get a finite upper bound to $c_\lambda$, that is, choose $\psi$ as in the proof of Lemma \ref{GC1}-ii, then by definition of $c_\lambda$, we have
$$
\begin{aligned}
c_\lambda &\leq \max_{\sigma \geq 0} I_\lambda (\sigma \psi)\\
&= \max_{\sigma \geq 0}\left( \frac{\sigma^2}{2}\int_{\R}|{_{-\infty}}D_{t}^{\alpha}\psi(t)|^2 - \int_{\R}W(t, \sigma \psi)dt \right)\\
&= \tilde{c},
\end{aligned}
$$
where $\tilde{c} <+\infty$ is independent of $\lambda$.

As a consequence of the above estimates, we have that $\Lambda(c_\lambda)$ is bounded form below. That is, there exists
$\Lambda_*>0$ such that the conclusion of Theorem \ref{Thm:MainTheorem1} is satisfied for $\lambda>\Lambda_*$.
\end{remark}

Consider $T = [-\varrho, \varrho]$ and the following fractional boundary value problem
\begin{equation}\label{eqn:BVP}
\left\{
  \begin{array}{ll}
   {_{t}}D_{\varrho}^{\alpha} {_{-\varrho}}D_{t}^{\alpha}u  = \nabla W(t, u),\quad t\in (-\varrho, \varrho),\\[0.1cm]
    u(-\varrho) = u(\varrho) = 0.
  \end{array}
\right.
\end{equation}
Associated to (\ref{eqn:BVP}) we have the functional $I: E_{0}^{\alpha} \to \mathbb{R}$ given by
$$
I(u):= \frac{1}{2}\int_{-\varrho}^{\varrho} |{_{-\varrho}}D_{t}^{\alpha}u(t)|^2dt - \int_{-\varrho}^{\varrho}W(t,u(t))dt
$$
and we have that $I\in C^1(E_{0}^{\alpha}, \mathbb{R})$ with
$$
I'(u)v = \int_{-\varrho}^{\varrho} \langle {_{-\varrho}}D_{t}^{\alpha}u(t), {_{-\varrho}}D_{t}^{\alpha}v(t)\rangle dt - \int_{-\varrho}^{\varrho}\langle\nabla W(t,u(t)), v(t) \rangle dt.
$$
Following the ideas of the proof of Theorem \ref{Thm:MainTheorem1}, we can get the following existence result
\begin{theorem}\label{Gthm1}
Suppose that $W$ satisfies $(W_1)-(W_4)$ with $t\in [-\varrho,\varrho]$, then (\ref{eqn:BVP}) has at least one weak nontrivial solution.
\end{theorem}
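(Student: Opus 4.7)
The plan is to mimic the variational scheme used for Theorem \ref{Thm:MainTheorem1}, but in the much friendlier setting of the bounded interval $[-\varrho,\varrho]$ and the space $E_0^\alpha$. The crucial simplification is that, by Proposition \ref{FC-FEprop4}, the embedding $E_0^\alpha \hookrightarrow C([-\varrho,\varrho],\mathbb{R}^n)$ is compact for $\alpha>1/2$, so the delicate loss-of-compactness argument in Lemma \ref{cerami2} is no longer needed. One applies the Mountain Pass Theorem with Cerami condition to $I$, obtaining a critical level $c>0$ and hence a nontrivial weak solution of (\ref{eqn:BVP}).

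First I would verify the mountain pass geometry of $I$. Using (\ref{mt05}) together with Proposition \ref{FC-FEprop3} (which bounds $\|u\|_{L^p}$ by $\|{_{-\varrho}}D_t^{\alpha}u\|_{L^2}$ up to a constant for every $p\geq 2$), one gets
\begin{equation*}
I(u) \geq \tfrac{1}{2}\|u\|_\alpha^2 - \tfrac{\epsilon}{2}C_1\|u\|_\alpha^2 - \tfrac{C_\epsilon}{p}C_2\|u\|_\alpha^p,
\end{equation*}
so choosing $\epsilon$ small and then $\|u\|_\alpha=\rho$ small enough yields $\inf_{\|u\|_\alpha=\rho}I(u)\geq \eta>0$. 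For the second geometric condition, pick any $\psi\in C_0^\infty((-\varrho,\varrho),\mathbb{R}^n)\setminus\{0\}$; by $(W_3)$ and Fatou's lemma,
\begin{equation*}
\frac{I(\sigma\psi)}{|\sigma|^2}=\tfrac{1}{2}\int_{-\varrho}^{\varrho}|{_{-\varrho}}D_t^\alpha\psi|^2\,dt-\int_{-\varrho}^{\varrho}\frac{W(t,\sigma\psi)}{|\sigma|^2}\,dt\longrightarrow -\infty
\end{equation*}
as $|\sigma|\to\infty$, so $e:=\sigma_0\psi$ with $\sigma_0$ large works.

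Next I would establish the Cerami condition. Boundedness of any $(Ce)_c$-sequence $\{u_n\}\subset E_0^\alpha$ is obtained by reproducing verbatim the argument of Lemma \ref{cerami1}: one sets $v_n=u_n/\|u_n\|_\alpha$, splits $[-\varrho,\varrho]$ into the sets $\Omega_n(0,\delta)$, $\Omega_n(\delta,R)$, $\Omega_n(R,+\infty)$, uses $(W_1)$ on the first, the continuity of $\nabla W$ on the second, and the $(W_2)$--$(W_4)$ inequality (\ref{mt14}) with Hölder on the third, exactly as before. Once $\{u_n\}$ is bounded, Proposition \ref{FC-FEprop4} yields a subsequence $u_n\rightharpoonup u$ in $E_0^\alpha$ with $u_n\to u$ \emph{uniformly} on $[-\varrho,\varrho]$; combined with $\langle I'(u_n),u_n-u\rangle\to 0$ this immediately gives $\|u_n-u\|_\alpha\to 0$, since the nonlinear term $\int_{-\varrho}^{\varrho}\langle \nabla W(t,u_n),u_n-u\rangle\,dt\to 0$ by uniform convergence and continuity of $\nabla W$ on bounded sets.

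Having both ingredients, the Mountain Pass Theorem with Cerami condition produces $\tilde u\in E_0^\alpha$ with
\begin{equation*}
I(\tilde u)=\inf_{g\in\Gamma}\max_{s\in[0,1]}I(g(s))\geq \eta>0,\qquad I'(\tilde u)=0,
\end{equation*}
where $\Gamma=\{g\in C([0,1],E_0^\alpha)\colon g(0)=0,\ g(1)=e\}$; in particular $\tilde u\neq 0$ is a weak solution of (\ref{eqn:BVP}). The only part requiring real work is the Cerami boundedness, which is a direct transcription of Lemma \ref{cerami1}; the compactness step that was the main obstacle on the whole line is trivial here thanks to Proposition \ref{FC-FEprop4}.
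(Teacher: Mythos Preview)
Your proposal is correct and follows exactly the approach the paper intends: the paper's own ``proof'' of Theorem~\ref{Gthm1} consists solely of the sentence ``Following the ideas of the proof of Theorem~\ref{Thm:MainTheorem1}, we can get the following existence result,'' and you have carried this out faithfully. In particular, you correctly identify that the mountain-pass geometry and the Cerami boundedness transfer verbatim from Lemmas~\ref{GC1} and~\ref{cerami1}, while the strong convergence step---which on $\mathbb{R}$ required the delicate $\lambda$-dependent argument of Lemma~\ref{cerami2}---becomes immediate on the bounded interval thanks to the compact embedding of Proposition~\ref{FC-FEprop4}.
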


\noindent
{\bf Proof of Theorem \ref{Thm:MainTheorem2}} We follow the argument in \cite{ZhangTorres}.
For any sequence $\lambda_k \to \infty$, let $u_k = u_{\lambda_k}$ be the critical point of $I_{\lambda_k}$, namely
$$
c_{\lambda_k} = I_{\lambda_k}(u_k)\quad \mbox{and}\quad I'_{\lambda_k}(u_k)=0,
$$
and, by (\ref{mt05}), we get
$$
\begin{aligned}
c_{\lambda_k} &= I_{\lambda_k}(u_k)=\frac{1}{2}\|u_k\|_{X^{\alpha,\lambda}}^2-\int_{\R} W(t,u_k(t))dt \\
&\geq \frac{1}{2}\|u_k\|_{X^{\alpha,\lambda}}^2-\frac{\epsilon}{2}\int_{\R}|u_k|^2dt - \frac{C_\epsilon}{p}\int_{\R}|u_k|^pdt,
\end{aligned}
$$
which implies that $\{u_k\}$ is bounded, due to Remarks \ref{Rem:Lp} and \ref{keynta}.
Therefore, we may assume that $u_k \rightharpoonup \tilde{u}$ weakly in $X^{\alpha,\lambda_k}$.
Moreover, by Fatou's lemma, we have
$$
\begin{aligned}
\int_{\mathbb{R}} l(t) |\tilde{u}(t)|^2dt \leq \liminf_{k\to \infty} \int_{\mathbb{R}} l(t)|u_k(t)|^2dt\leq \liminf_{k\to \infty} \int_{\mathbb{R}} (L(t)u_k(t), u_{k}(t)) dt \leq \liminf_{k\to \infty} \frac{\|u_k\|_{X^{\alpha, \lambda_k}}^2}{\lambda_{k}} = 0.
\end{aligned}
$$
Thus, $\tilde{u} = 0$ a.e. in $\mathbb{R} \setminus J$. Now, for any $\varphi \in C_{0}^{\infty}(T, \mathbb{R}^n)$, since $I'_{\lambda_k}(u_k)\varphi=0$, it is
easy to see that
$$
\int_{-\varrho}^{\varrho} ({_{-\varrho}}D_{t}^{\alpha} \tilde{u}(t), {_{-\varrho}}D_{t}^{\alpha}\varphi(t)) dt - \int_{-\varrho}^{\varrho} (\nabla W(t,\tilde{u}(t)), \varphi (t)) dt=0,
$$
that is, $\tilde{u}$ is a solution of (\ref{eqn:BVP}) by the density of $C_{0}^{\infty}(T, \mathbb{R}^n)$ in $E^{\alpha}$.

Now we show that $u_k \to \tilde{u}$ in $X^{\alpha}$. Since $I'_{\lambda_k}(u_k) u_k=I'_{\lambda_k}(u_k)\tilde{u}=0$, we have
\begin{equation}\label{c6}
\|u_k\|_{X^{\alpha, \lambda_k}}^{2} = \int_{\mathbb{R}} (\nabla W(t, u_k(t)), u_k(t)) dt
\end{equation}
and
\begin{equation}\label{c7}
\langle u_k, \tilde{u} \rangle_{\lambda_k} = \int_{\mathbb{R}} (\nabla W(t, u_k(t)), \tilde{u}(t)) dt,
\end{equation}
which implies that
$$
\lim_{k\to \infty} \|u_{k}\|_{X^{\alpha, \lambda_k}}^{2} = \lim_{k\to \infty} \langle u_k, \tilde{u} \rangle_{X^{\alpha, \lambda_k}} = \lim_{k\to \infty} \langle u_k, \tilde{u} \rangle_{X^{\alpha}} = \|\tilde{u}\|_{X^\alpha}^2.
$$
Furthermore, by the weak semi-continuity of norms we obtain
$$
\|\tilde{u}\|_{X^{\alpha}}^{2} \leq \liminf_{k \to \infty} \|u_k\|_{X^{\alpha}}^{2} \leq \limsup_{k\to \infty}\|u_k\|_{X^\alpha}^{2} \leq \lim_{k\to \infty}\|u_k\|_{X^{\alpha,\lambda_k}}^{2}.
$$
So $u_k \to \tilde{u}$ in $X^{\alpha}$, and $u_k \to \tilde{u}$ in $H^{\alpha}(\mathbb{R}, \mathbb{R}^n)$ as $k\to \infty$.
\qed

\end{document}